\def\R{\mathbb R}
\theoremstyle{plain}
\numberwithin{equation}{section} \numberwithin{figure}{section}
\newtheorem{theorem}{Theorem}[section]
\newtheorem{lemma}[theorem]{Lemma}
\newtheorem{proposition}[theorem]{Proposition}
\newtheorem{definition}[theorem]{Definition}
\theoremstyle{definition}
\newtheorem{remark}[theorem]{Remark}
\numberwithin{equation}{section}
\begin{document}

\markboth{Pablo Ochoa and Analia Silva}
{Magnetic fractional p-Laplacian}

%

%

\title[Calder\'on-Zygmund estimates]{Calder\'on-Zygmund estimates for higher order elliptic equations in Orlicz-Sobolev spaces}

\author{Juli\'an Fern\'andez Bonder, Pablo Ochoa and Anal\'ia Silva}

\address[J.F. Bonder]{Instituto de C\'alculo -- CONICET and
Departamento de Matem\'atica, FCEN -- Universidad de Buenos Aires. 
Ciudad Universitaria, Edificio $0+\infty$, C1428EGA, Av. Cantilo s/n
Buenos Aires, Argentina\\
{\tt jfbonder@dm.uba.ar\\ 
https://mate.dm.uba.ar/\~{}jfbonder/index.html}}

\address[P.Ochoa]{Universidad Nacional de Cuyo-CONICET, Parque Gral. San Mart\'in\\
Mendoza, 5500, Argentina\\
{\tt ochopablo@gmail.com}}

\address[A.Silva]{Departamento de Matem\'atica, FCFMyN, Universidad Nacional de San Luis and Instituto de Matem\'atica Aplicada San luis (IMASL),
UNSL-CONICET. Ejercito de los Andes
950, D5700HHW,San Luis,
Argentina\\
{\tt acsilva@unsl.edu.ar\\analiasilva.weebly.com}}

\subjclass[2020]{46E30, 35P30, 35D30}

\keywords{Orlicz-Sobolev spaces, biharmonic Laplacian}
\maketitle
\begin{abstract}
In this paper we obtain Calder\'on-Zygmund estimates for the laplacian of the following fourth order quasilinear elliptic problem
$$
\Delta(g(\Delta u)\Delta u) = \Delta(g(\Delta f)\Delta f).
$$
where the primitive of $g(t)t$, $G(t)$, is an $N-$function. We prove that if $G(f)\in L^q$, then $G(\Delta u)\in L^q$ for $q\ge 1$.
\end{abstract}

\section{Introduction}
In this paper, we consider  Calder\'on-Zygmund regularity results associated to quasilinear equations involving  higher-order operators in the framework of Orlicz-Sobolev spaces. More precisely, we are concern with higher integrability properties on weak solutions to the problem
\begin{equation}\label{eq}
\Delta(g(\Delta u)\Delta u) = \Delta(g(\Delta f)\Delta f),
\end{equation}
in a bounded domain $\Omega\subset \R^n$. Our results are of local nature so no regularity on the boundary of $\Omega$ nor boundary conditions are assumed.

This article can be seen as a first step in extending classical Calder\'on-Zygmund--type estimates to higher order nonlinear elliptic problems. For second order problems, this has been the subject of active research in the past two decades (or even more). For instance, in \cite{YZ} the authors consider weak solutions to the problem
$$
\text{div}(g(|\nabla u|)\nabla u) = \text{div}(g(|\mathbf f|)\mathbf f)
$$
and obtained the estimate
$$
G(|\mathbf f|)\in L^q \Rightarrow G(|\nabla u|)\in L^q,\qquad q\ge 1.
$$
Here $G(t)=\int_0^t g(s)s\, ds$. The results in \cite{YZ} can be thought as an extension of the classical estimates of Calder\'on-Zygmund type that for the case of the $p-$laplacian (i.e. $g(t)=t^{p-2}$) were obtained in \cite{DB-M} and \cite{I}.

So, the main result in this paper is to obtain the following estimate for weak solutions to \eqref{eq}:
$$
G(|f|)\in L_{loc}^q \Rightarrow G(|\Delta u|)\in L_{loc}^q,\qquad q\ge 1.
$$

One important motivation to analyze problem \eqref{eq} came from the analysis of Lane-Emden type systems of the form
\begin{equation}\label{system}
\begin{cases}-\Delta u = \phi(v) \quad \text{in }\Omega\\
-\Delta v = \psi(u)\quad \text{in }\Omega\\
u=v=0\quad \text{on }\partial\Omega.
\end{cases}
\end{equation}

Let us suppose that $\psi$ is odd and one to one. Let $h=\psi^{-1}$. From the equation $-\Delta v = \psi(u)$, we may solve for $u$ and get
$$
u=-h(\Delta v).
$$
Plugging into the first equation of the system \eqref{system}, we deduce
$$
\phi(v)= -\Delta u = \Delta(h(\Delta v)) = \Delta(g(\Delta v)\Delta v),
$$
where
$$
g(t)=\dfrac{h(t)}{t}.
$$ 
The operator
$$\Delta_g^2 u= \Delta \left(g(\Delta u)\Delta u\right),$$is called the  biharmonic $g$-Laplacian and it was largely studied in \cite{OS}.

\bigskip

Therefore, our main  theorem is the following:
\begin{theorem}\label{main theorem}
Let $\Omega\subset \mathbb{R}^n$ be a bounded domain. Let  $G(t)=\int_0^tg(s)s\,ds$ be an N-function satisfying \eqref{g}, \eqref{G} and \eqref{GG}-\eqref{GGG}, and let $u\in W_{loc}^{2, G}(\Omega)$ be a local weak solution of problem \eqref{eq}. 

Then, if
$$
G(f)\in L_{loc}^q(\Omega), \quad \text{for some }q \geq 1, \text{ then we also have}\quad G(\Delta u)\in  L_{loc}^q(\Omega).
$$
Moreover,  the following estimate holds: there is $C>0$ such that
\begin{equation*}
\int_{B_r(x_0)}\left(G(\Delta u)\right)^q\,dx \leq C\left[\left(\int_{B_{2r}(x_0)}G\left(\dfrac{1}{r^2} u\right)\,dx+\int_{B_{2r}(x_0)}G\left(\dfrac{1}{r} |\nabla u|\right)\,dx\right)^q + \int_{B_{2r}(x_0)}\left(G(f)\right)^q\,dx\right],
\end{equation*}for any $r>0$ and $x_0\in \Omega$ so that $B_{4r}(x_0)\subset \Omega$. 
\end{theorem}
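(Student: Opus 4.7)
The plan is to follow the classical perturbation/comparison strategy for nonlinear Calder\'on--Zygmund estimates, adapted to the fourth-order Orlicz--Sobolev setting. Fix $B_{4r}(x_0)\subset\Omega$. The starting point is a homogeneous comparison problem: let $v\in W^{2,G}(B_{2r})$ be the unique solution of $\Delta(g(\Delta v)\Delta v)=0$ in $B_{2r}$ with $v-u\in W_0^{2,G}(B_{2r})$; existence and uniqueness follow from monotonicity arguments in Orlicz--Sobolev spaces, cf.\ \cite{OS}.

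Two estimates drive the proof. First, using $u-v\in W_0^{2,G}(B_{2r})$ as test function in both weak formulations and subtracting, the strict monotonicity of $t\mapsto g(t)t$ (which follows from the structural hypotheses \eqref{g}--\eqref{GGG}) yields a comparison estimate of the form
\[
\int_{B_{2r}}G(|\Delta u-\Delta v|)\,dx \le C\int_{B_{2r}}G(f)\,dx,
\]
after controlling the source term through Young's inequality in Orlicz spaces and the complementary $N$-function $G^*$. Second, a Caccioppoli-type inequality combined with a local $L^\infty$ bound on $\Delta v$ for the homogeneous equation gives, after transferring from $v$ to $u$ via the boundary condition $v-u\in W_0^{2,G}(B_{2r})$,
\[
\int_{B_r}\bigl(G(\Delta v)\bigr)^q\,dx \le C\left(\int_{B_{2r}}G\!\left(\tfrac{1}{r^2}u\right)\,dx+\int_{B_{2r}}G\!\left(\tfrac{1}{r}|\nabla u|\right)\,dx\right)^q.
\]

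Combining the two through $G(\Delta u)\le C\bigl(G(\Delta v)+G(\Delta u-\Delta v)\bigr)$ (a consequence of the $\Delta_2$ property encoded in \eqref{GG}--\eqref{GGG}) already gives the bound when $q=1$; for $q>1$ the desired higher integrability comes from a Caffarelli--Peral-type good-$\lambda$ inequality on the super-level sets of the Hardy--Littlewood maximal function of $G(\Delta u)$, followed by integration in $\lambda$. I expect the main obstacle to be the local $L^\infty$ bound on $\Delta v$ for the homogeneous fourth-order equation in the Orlicz setting: this requires a Moser-type iteration for the biharmonic $g$-Laplacian, substantially more delicate than the analogous second-order argument used in \cite{YZ}, since at the level of $\Delta v$ there is no pointwise maximum principle and the cross-terms in $\nabla v$ produced by differentiating through $g(\Delta v)\Delta v$ must be absorbed carefully. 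A secondary difficulty is identifying the correct integration-by-parts that replaces the natural right-hand side $G(\Delta f)$ by $G(f)$ in the final estimate.
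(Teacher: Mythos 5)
Your overall scaffolding matches the paper's: compare $u$ with a solution $v$ of the homogeneous problem $\Delta(g(\Delta v)\Delta v)=0$ with the same boundary data, obtain a comparison estimate $\int G(|\Delta u-\Delta v|)\lesssim\int G(f)$ by testing both equations with $u-v$ and using Lemma~\ref{inequality g}, prove a local $L^\infty$ bound on $G(\Delta v)$, and then run the Wang/Caffarelli--Peral good-$\lambda$ argument via the modified Vitali Lemma~\ref{vitali}. Up to that point you and the paper agree, and the paper also proves the $q=1$ case first (Theorem~\ref{theorem q 1}) as a Caccioppoli-type normalization step, which plays the role you assign to your ``Caccioppoli-type inequality combined with a local $L^\infty$ bound.''

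However, the step you flag as ``the main obstacle'' --- the local $L^\infty$ bound on $\Delta v$ --- is exactly where you go wrong, and the gap is substantive. You assert that ``at the level of $\Delta v$ there is no pointwise maximum principle'' and therefore a Moser iteration for the biharmonic $g$-Laplacian would be needed. The paper's key observation (Proposition~\ref{maximum principle}) is the opposite: after replacing $g$ by the regularization $g_\varepsilon(t)=g(\sqrt{\varepsilon+t^2})$, the solution $v^\varepsilon$ of the regularized homogeneous problem is $C^{4,\gamma}$ (this uses the Zhang--Bao regularity for very weak solutions of $\Delta w=0$ applied to $w=g_\varepsilon(\Delta v^\varepsilon)\Delta v^\varepsilon$, then the inverse function theorem), and one can compute pointwise that $w:=G_\varepsilon(\Delta v^\varepsilon)$ satisfies $a(x)\Delta w\geq 0$ for a uniformly elliptic scalar coefficient $a$, i.e.\ $w$ is subharmonic. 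This follows by differentiating $\Delta(g_\varepsilon(\Delta v^\varepsilon)\Delta v^\varepsilon)=0$ twice, multiplying by $\Delta v^\varepsilon$, and grouping terms using the structural bounds \eqref{g}--\eqref{g1}; the sign conditions handle the mixed terms you were worried about. The weak maximum principle then gives $\sup_{B_{2-\delta}}G_\varepsilon(\Delta v^\varepsilon)\leq C(\int_{B_2}G(\Delta u)+1)$ uniformly in $\varepsilon$, and a compactness argument passes to the limit. So no Moser iteration is needed; instead the $\varepsilon$-regularization is essential to even make sense of the computation. Your ``secondary difficulty'' about replacing $G(\Delta f)$ by $G(f)$ is not a real issue: the weak formulation (Definition~\ref{defi weak}) already has $g(f)f\Delta\varphi$ on the right, so testing with $u-v$ and applying Young's inequality \eqref{Young} together with Lemma~\ref{G g} immediately produces $\int G(f)$, with no integration by parts on $f$ required.
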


Let us mentioned that the strategy of the proof in obtaining Theorem \ref{main theorem} were used in \cite{YZ} and that these ideas were already present in the seminal work \cite{W}.

To end this introduction, let us present an heuristic argument for the validity of Theorem \ref{main theorem}. 

Let $u\in W_{loc}^{2, G}(\Omega)$ be a local weak solution of \eqref{eq} and assume that $G(f)\in L_{loc}^q(\Omega)$. Then, the function
  $$w=g(\Delta u)\Delta u-g(f)f\in L_{loc}^1(\Omega)$$is a very weak solution of $\Delta w=0$ in $\Omega$. Then, by  \cite{ZB}, $w\in W_{loc}^{2, p}(\Omega)$ for any $p$, and consequently, $w$ is indeed a weak solution of $\Delta w=0$. This implies that 
  $$w\in C^{\infty}(\Omega).$$So, letting $h$ be the inverse of $g(t)t$,
\begin{equation}\label{heu}
 \Delta u=h(g(f)f + w),\quad w\in C^\infty,
\end{equation}and if 
  $$h(g(f)f + w)\approx f + \text{smooth term},$$we will certainly derive from \eqref{heu} that
  $$G(\Delta u)\approx G(f)\in L_{loc}^q(\Omega).$$

\subsection*{Organization of the paper}

In Section \ref{prelim} we collect the preliminaries needed in the work and state precisely the hypotheses needed for the nonlinearity $g$. In Section \ref{sec q=1} we analyze Theorem \ref{main theorem} in the special and simpler case of $q=1$. Finally, in Section \ref{sec main} we prove Theorem \ref{main theorem}.

\section{Preliminaries}\label{prelim}

In this section, we introduce basic definitions and preliminary results related to Orlicz spaces. 

\subsection*{N-functions} We start recalling the definition of an N-function.
\begin{definition}\label{d2.1}
A function $G \colon [0, \infty) \rightarrow \mathbb{R}$ is called an N-function if it admits the representation
$$
G(t)= \int _{0} ^{t} g(\tau)\tau d\tau,
$$
where the function $g$ is right-continuous for $t \geq 0$, even,  positive for $t >0$, non-decreasing and satisfies the conditions
$$
g(0)=0, \quad \lim_{t \to \infty}g(t)t=\infty.
$$
\end{definition}
By \cite[Chapter 1]{KR}, an N-function has also the following properties:
\begin{enumerate}
\item $G$ is continuous, convex, increasing, even and $G(0) = 0$.
\item $G$ is super-linear at zero and at infinite, that is 
$$
\lim_{t\rightarrow 0+} \frac{G(t)}{t}=0\quad \text{and}\quad \lim_{t\rightarrow \infty} \frac{G(t)}{t}=\infty.
$$
\end{enumerate}

Indeed, the above conditions serve as an equivalent definition of N-functions.

An important class of N-functions is the following:
\begin{definition}\label{def delta2}
We say that the N-function  $G$ satisfies the (global) $\bigtriangleup_{2}$ condition if  there exists $C > 2$ such that
\begin{equation*}
G(2t) \leq C G(t), \qquad \text{for all } t>0.
\end{equation*}
\end{definition}
			
Examples of functions satisfying the $\bigtriangleup_{2}$ condition are, for instance:
\begin{itemize}
        \item $G(t)= t^{p}$, $t \geq 0$, $p > 1$;
        \item $G(t)=(1+|t|)\log(1+|t|) - |t|$;
        \item $G(t)=t^{p}\chi_{(0, 1]}(t) + t^{q}\chi_{(1, \infty)}(t)$, $t \geq 0$, $p, q > 1$.
        \item $G(t)=t^p \log^{\alpha}(1+t)$, $p>1$, $0<\alpha\le 1$.
    \end{itemize}

    
    By \cite[Theorem 4.1, Chapter 1]{KR},  an N-function  satisfies the $\bigtriangleup_{2}$ condition if and only if there is $p^{+} > 1$ such that
			\begin{equation}\label{eq p mas}
		\frac{t^2g(t)}{G(t)} \leq p^{+}, ~~~~~\forall\, t>0.
	\end{equation}
	
Associated to $G$ is  the N-function  complementary to it which is defined as follows:
\begin{equation}\label{Gcomp}
\widetilde{G} (s) := \sup \left\lbrace ts-G(t) \colon t>0 \right\rbrace .
\end{equation}
It is shown in \cite{KR} that $\widetilde{G}$ is also an $N-$function.	
	
The definition of the complementary function gives the optimal function such that the following Young-type inequality holds
	\begin{equation}\label{2.5}
		st \leq G(t)+\widetilde{G} (s) \text{  for every } s,t \geq 0.
	\end{equation}

\begin{remark}\label{igualdad}
It is easy to check that the equality in \eqref{2.5} holds for  $s=g(t)(t)$.
\end{remark}
	
We also quote the following useful lemma.
\begin{lemma}\cite[Lemma 2.9]{BS}\label{G g}
Let $G$ be an N-function. If $G$ satisfies \eqref{eq p mas} then 
        \[
            \tilde{G}(g(t)t) \leq (p^+-1)G(t).
        \]
\end{lemma}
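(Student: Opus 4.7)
The plan is to prove the inequality $\tilde{G}(g(t)t)\le (p^+-1)G(t)$ by exploiting the sharp case of the Young-type inequality recorded just before the lemma. The key observation, already flagged in the remark preceding the statement, is that the supremum defining $\tilde{G}(s)$ is attained at $t$ whenever $s=g(t)t$. This turns what could be an abstract Legendre-transform estimate into an explicit identity, which is half the battle.

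First I would use that remark to evaluate $\tilde{G}$ at the point of interest. Since equality holds in \eqref{2.5} when $s=g(t)t$, the supremum in \eqref{Gcomp} is achieved, giving the identity
\[
\tilde{G}(g(t)t)=g(t)t\cdot t - G(t)=g(t)t^{2}-G(t).
\]
This reduces the problem to controlling the pointwise quantity $g(t)t^{2}-G(t)$ from above by $(p^{+}-1)G(t)$.

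Next, I would invoke the hypothesis \eqref{eq p mas}, which tells us precisely that $t^{2}g(t)\le p^{+}G(t)$ for all $t>0$. Substituting this bound into the identity of the previous step yields
\[
\tilde{G}(g(t)t)=g(t)t^{2}-G(t)\le p^{+}G(t)-G(t)=(p^{+}-1)G(t),
\]
which is the desired inequality.

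There is essentially no obstacle here: the proof is a one-line computation once one recognizes that Remark \ref{igualdad} converts the supremum in the definition of $\tilde{G}$ into an algebraic identity, at which point the $\Delta_{2}$-type bound \eqref{eq p mas} closes the argument. The only thing worth double-checking while writing the full proof is the justification that the supremum defining $\tilde{G}$ is actually attained (rather than merely approached) at $t$: this follows because $s\mapsto ts-G(s)$ is concave with derivative $t-g(s)s$, vanishing precisely at the prescribed $s=g(t)t$, but it is already granted by the remark.
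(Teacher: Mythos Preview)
Your argument is correct: using Remark~\ref{igualdad} to get the identity $\tilde G(g(t)t)=g(t)t^2-G(t)$ and then invoking \eqref{eq p mas} is exactly the standard two-line proof. The paper does not supply its own proof of this lemma---it is quoted from an external reference---so there is nothing to compare against; your proof is the expected one. (A minor notational slip in your final aside: the supremum in \eqref{Gcomp} is over $t$, not $s$, so the concave function is $t\mapsto st-G(t)$ with derivative $s-g(t)t$; the conclusion is unaffected.)
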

	
By \cite[Theorem 4.3,   Chapter 1]{KR}, a necessary and sufficient condition for the N-function $\widetilde{G} $ complementary to $G$ to satisfy the $\bigtriangleup_{2}$ condition is that there is $p^{-} > 1$ such that
\begin{equation}
p^{-} \leq 	\frac{t^2g(t)}{G(t)}, ~~~~~\forall\, t>0.
\end{equation}

From now on, we will assume the stronger condition that the N-function  $G(t)= \int _{0} ^{t} g(s) s\, ds$  satisfies the following estimate:
\begin{equation}\label{g}
p^{-}-3 \leq \frac{tg''(t)}{g'(t)} \leq p^{+}-3, \quad \forall t>0,
\end{equation}
for some $1<p^-\le p^+$. Moreover we assume that
\begin{equation}\label{G}
G(\sqrt{t}) \text{ is convex in }\mathbb{R}.
\end{equation}
	
	We observe that condition \eqref{g} implies
\begin{equation}\label{g1}
p^{-}-2 \leq \frac{tg'(t)}{g(t)} \leq p^{+}-2
\end{equation}and
	\begin{equation}\label{G1}
1<p^{-} \leq \frac{t^2g(t)}{G(t)} \leq p^{+},
\end{equation}and so the $\Delta_2$-condition holds for both $G$ and $\tilde{G}$. Also, observe that when $g(t)=|t|^{p-2}$, then assumption \eqref{G} implies $p \geq 2$. 

Observe that the examples presented after Definition \ref{def delta2} all verify \eqref{g}.

As a consequence of \eqref{G1}, it follows  for $s, t \geq 0$ that
\begin{equation}\label{ineq at}
\min\left\lbrace s^{p^+}, s^{p^-}\right\rbrace G(t)\leq G(st)\leq 	G(t)\max\left\lbrace s^{p^+}, s^{p^-}\right\rbrace.
\end{equation}
Hence, combining \eqref{2.5} and \eqref{ineq at}, we have the following Young's inequality with $\varepsilon\in (0, 1)$:
\begin{equation}\label{Young}
st = (\varepsilon s) (\varepsilon^{-1}t) \leq \widetilde{G}(\varepsilon s)+G(\varepsilon^{-1}t ) \leq \varepsilon \widetilde{G}(s)+ \varepsilon^{-p^+}G(t).
\end{equation}

We quote the following useful  estimate (see \cite[Lemma 7.1]{O}):

\begin{lemma}\label{inequality g}
Suppose that $G$ is an N-function satisfying \eqref{G} and \eqref{g1}. There exists a constant $C>0$ such that for all $a, b \in \mathbb{R}$ we have
$$
(g(a)a-g(b)b)(a-b) \geq CG(|a-b|).
$$
\end{lemma}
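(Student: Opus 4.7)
The plan is to reduce to a one-sided monotonicity estimate via a case analysis on the signs of $a$ and $b$. Assume without loss of generality that $a > b$ and distinguish three cases: (i) $a \geq b \geq 0$; (ii) $a \geq 0 > b$; and (iii) $0 \geq a > b$. Case (iii) reduces to case (i) immediately by the evenness of $g$: one has $g(a)a - g(b)b = g(|b|)|b| - g(|a|)|a|$ and $a - b = |b| - |a|$, while $|b| \geq |a| \geq 0$, so the pair $(|b|,|a|)$ plays the role of the pair in case (i).

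For the principal case (i), I would represent the left-hand side as an integral,
$$
g(a)a - g(b)b = \int_b^a \bigl(g(s) + sg'(s)\bigr)\, ds,
$$
and invoke \eqref{g1} to bound the integrand from below by $(p^- - 1)\, g(s)$. Since $g$ is nondecreasing (from the definition of $N$-function) and $b \geq 0$, the change of variables $u = s - b$ yields
$$
\int_b^a g(s)\, ds \;=\; \int_0^{a-b} g(b + u)\, du \;\geq\; \int_0^{a-b} g(u)\, du.
$$
Multiplying by $(a-b)$ and using $u \leq a - b$ on $[0, a-b]$ to bound $g(u)(a-b) \geq g(u)\,u$ pointwise,
$$
(a-b) \int_0^{a-b} g(u)\, du \;\geq\; \int_0^{a-b} g(u) u\, du \;=\; G(a-b),
$$
and therefore $(g(a)a - g(b)b)(a-b) \geq (p^- - 1)\, G(|a-b|)$.

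Case (ii) is genuinely different, since $a$ and $b$ contribute additively. By evenness, $g(a)a - g(b)b = g(|a|)|a| + g(|b|)|b|$ and $a - b = |a| + |b| = |a - b|$. Expanding the product, discarding the nonnegative cross terms, and using the lower bound $t^2 g(t) \geq p^- G(t)$ from \eqref{G1},
$$
(g(a)a - g(b)b)(a-b) \;\geq\; g(|a|)|a|^2 + g(|b|)|b|^2 \;\geq\; p^-\bigl(G(|a|) + G(|b|)\bigr).
$$
Since $G$ satisfies the $\Delta_2$-condition by \eqref{G1}, one has $G(|a|+|b|) \leq G(2\max(|a|,|b|)) \leq C(G(|a|)+G(|b|))$, and combining with the previous line gives the desired estimate.

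I do not anticipate a serious obstacle: the argument is bookkeeping with cases, an integral representation, and the available pointwise bounds on $g$ and $G$. The role of hypothesis \eqref{G} is implicit: it enforces the ``$p \geq 2$-type'' regime (e.g., $G(\sqrt{t})$ convex would fail for the $p$-Laplacian with $1 < p < 2$) which is precisely the setting in which a lower bound of the form $CG(|a-b|)$ is attainable; without it, only the weaker coercivity familiar from the subquadratic case would hold. The single delicate point is ensuring the constant depends only on $p^-$ and the $\Delta_2$-constant, which is automatic from the chain of inequalities above.
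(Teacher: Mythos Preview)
Your argument is correct in all three cases. Note that the paper does not supply its own proof of this lemma---it simply cites \cite[Lemma~7.1]{O}---so there is no in-paper argument to compare against directly. One observation worth making: your proof never actually invokes hypothesis~\eqref{G} (convexity of $G(\sqrt{t})$); it relies only on \eqref{g1}, its consequence \eqref{G1}, and the monotonicity of $g$ built into the $N$-function axioms. Hence you have established a slightly stronger statement than the one quoted, and your closing remark about the role of \eqref{G} being ``implicit'' can be dropped---in your approach the hypothesis is simply unused.
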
 	
	
\begin{definition}
Given two $N$-functions $A$ and $B$, we say that $A$ increases essentially more slowly than $B$, denoted by $A \ll B,$ if for any $c > 0$,
\begin{equation*}
\lim_{t \rightarrow \infty} \dfrac{A(ct)}{B(t)}=0.
\end{equation*}
\end{definition}

\subsection*{Orlicz-Sobolev spaces} Given an N-function $G$, with $G'(t)=g(t)t$, we define the Orlicz-Lebesgue class $L^G(\Omega)$ as follows
$$
L^G(\Omega):=\left\lbrace u: \Omega \to \mathbb{R}, \int_\Omega G(u)\,dx < \infty\right\rbrace.
$$
If $G$ satisfies the $\Delta_2$ condition, then $L^G$ becomes a vector spaces, that it is a Banach space with respect, for instance, to the Luxemburg norm
$$
\|u\|_G:=\inf \left\lbrace \lambda > 0: \int_\Omega G\left(\dfrac{u}{\lambda}\right)\,dx \leq 1\right\rbrace.
$$
We will denote the convex modular associated to the norm by
$$
\rho_{G}(u):=\int_\Omega G(u)\,dx.
$$

For any positive integer $m\in \mathbb N$ we will also consider the Orlicz-Sobolev spaces
$$
W^{m, G}(\Omega):=\left\lbrace u \in L^G(\Omega), \, D^\alpha u  \in L^G(\Omega), \,\text{for all multi-index }\, |\alpha|\leq m\right\rbrace.
$$
The space $W^{m, G}(\Omega)$ equipped with the norm
$$
\|u\|_{m,G}:= \sum_{|\alpha|\leq m}\|D^{\alpha}u\|_G
$$
is a Banach space. 

In the case where also $\widetilde G$ satisfies the $\Delta_2$ condition, these spaces are reflexive and separable.

In order to have the Sobolev immersions, we need to require some assumptions on $G$ that are the analog of $p<n$ in the classical cases. So, we need to assume that $G$ satisfies:
\begin{equation}\label{GG}
\int_0^1 \dfrac{G^{-1}(s)}{s^{(n+1)/n}}\,ds <\infty
\end{equation}

\begin{equation}\label{GGG}
\int_1^\infty \dfrac{G^{-1}(s)}{s^{(n+1)/n}}\,ds =\infty
\end{equation}

 For a given $N$-function $G$, define the first order Sobolev conjugate function $G_1^{*}$  of $G$ by means of
 $$(G_1^{*})^{-1}(t):=\int_0^{t}\dfrac{G^{-1}(s)}{s^{1+1/n}}\,ds.$$Then $G_1^*$ is an $N$-function (see \cite{BSV2}). Next, we define the $m$-th order conjugate Sobolev function of $G$  recursively as follows
 $$
 G^{*}_0 := G,\quad G^{*}_j:= (G^{*}_{j-1})^*, \quad j=1,\dots, m.
 $$
 At each stage, we assume that
 $$\int_0^1 \dfrac{(G_j^*)^{-1}(s)}{s^{(n+1)/n}}\,ds <\infty.$$We obtain in this way a finite sequence of $N$-functions $G^{*}_j$, $j=0,\dots, m_0$, where $m_0$ is such that
 $$\int_1^\infty \dfrac{(G_{m_0-1}^*)^{-1}(s)}{s^{(n+1)/n}}\,ds =\infty$$but
 $$\int_1^\infty \dfrac{(G_{m_0}^*)^{-1}(s)}{s^{(n+1)/n}}\,ds <\infty.$$Indeed, $m_0 \leq n$, since by induction it can be proved that (see \cite{DT})
 $$(G_m^*)^{-1}(t)\leq K_m t^{(n-j)/n}.$$

 Then, we have the following embedding theorem for higher-order Orlicz-Sobolev spaces stated in \cite{DT}.

 \begin{theorem}\label{compact theorm}
 Let $\Omega \subset \mathbb{R}^{n}$ be a bounded domain with the cone property. Let $G$ be an $N$-function satisfying \eqref{G1} and let $m_0$ be defined as before. Then
 \begin{enumerate}
 \item if $1 \leq m \leq m_0$, then $W^{m, G}(\Omega) \hookrightarrow L^{G_{m}^{*}}(\Omega)$. Moreover, if $B$ is an $N$-function increasing essentially more slowly than $G^*_m$ near infinity, then the embedding $W^{m, G}(\Omega) \hookrightarrow L^{B}(\Omega)$ is compact;
 
 \item if $m > m_0$, then $W^{m, G}(\Omega) \hookrightarrow C(\Omega) \cap L^{\infty}(\Omega)$.
 \end{enumerate}
 \end{theorem}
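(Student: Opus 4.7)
The plan is to reduce the higher-order embedding statement to an iteration of the first-order Orlicz-Sobolev embedding. Under hypothesis \eqref{GG}, this base case $W^{1,G}(\Omega)\hookrightarrow L^{G_1^*}(\Omega)$ is the classical Donaldson-Trudinger/Cianchi result, whose proof proceeds via rearrangement: a Pólya-Szegő inequality for Orlicz norms of the gradient reduces matters to a one-dimensional Hardy-type inequality for the symmetric decreasing rearrangement $u^*$, which combined with the kernel $s^{-1-1/n}G^{-1}(s)$ produces exactly the conjugate function $G_1^*$ defined by $(G_1^*)^{-1}(t)=\int_0^t G^{-1}(s)\,s^{-1-1/n}\,ds$. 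The role of \eqref{GG} is to ensure $G_1^*$ is a genuine $N$-function (finite near zero), while the $\Delta_2$ and complementary $\Delta_2$ structure encoded in \eqref{G1} propagates to each iterate.

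To obtain item (1), set $H_0:=G$ and $H_j:=(H_{j-1})^*$, so that $H_j=G_j^*$. Given $u\in W^{m,G}(\Omega)$ with $1\le m\le m_0$, every derivative $D^\alpha u$ with $|\alpha|\le m-1$ lies in $W^{1,G}(\Omega)$ because $D^{\alpha+e_k}u\in L^G(\Omega)$ for each $k$. Applying the first-order embedding yields $D^\alpha u\in L^{H_1}(\Omega)$, hence $u\in W^{m-1,H_1}(\Omega)$. The assumption that each $H_j$ for $j\le m_0-1$ still satisfies the analogue of \eqref{GG} guarantees that the iteration can be repeated; after $m$ steps the differentiability index reaches zero and one lands in $L^{H_m}(\Omega)=L^{G_m^*}(\Omega)$.

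For item (2), iterate exactly as above until the target index is $m_0$, producing $u\in W^{m-m_0,H_{m_0}}(\Omega)$ with $m-m_0\ge 1$. The defining property of $m_0$, namely $\int_1^\infty (H_{m_0})^{-1}(s)\,s^{-1-1/n}\,ds<\infty$, is the Orlicz analogue of the Morrey regime and yields the first-order embedding $W^{1,H_{m_0}}(\Omega)\hookrightarrow C(\Omega)\cap L^\infty(\Omega)$ by the classical pointwise estimate on the Riesz potential representation of $u$. Applying this to any $D^\alpha u$ with $|\alpha|\le m-m_0-1$ delivers $u\in C(\Omega)\cap L^\infty(\Omega)$.

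For the compactness assertion, first establish a first-order compact embedding $W^{1,G}(\Omega)\hookrightarrow\hookrightarrow L^B(\Omega)$ whenever $B\ll G_1^*$. A bounded sequence $(u_k)\subset W^{1,G}(\Omega)$ is bounded in $W^{1,1}(\Omega)$, hence precompact in $L^1(\Omega)$ by Rellich-Kondrachov, so a subsequence converges almost everywhere; combined with uniform control of $\rho_{G_1^*}(u_k)$ from the continuous embedding and the slow-growth condition $B\ll G_1^*$, Vitali's convergence theorem upgrades this to convergence in $L^B(\Omega)$. Iterating along the chain $W^{m,G}\hookrightarrow W^{m-1,H_1}\hookrightarrow\cdots\hookrightarrow W^{1,H_{m-1}}$ with continuous intermediate steps and a compact final step gives compactness of $W^{m,G}(\Omega)\hookrightarrow L^B(\Omega)$ for $B\ll G_m^*$. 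The main obstacle is the rearrangement proof of the first-order embedding and the verification that the $N$-function structure (in particular the conditions in \eqref{GG} and the chain defining $m_0$) is stable under the Sobolev-conjugate operation; once these base ingredients are in place, the rest is bookkeeping.
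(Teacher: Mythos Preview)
The paper does not supply its own proof of this theorem: it is quoted verbatim as ``the following embedding theorem for higher-order Orlicz-Sobolev spaces stated in \cite{DT}''. Your sketch is precisely the Donaldson--Trudinger iteration strategy underlying that reference---bootstrap the first-order embedding $W^{1,H_j}\hookrightarrow L^{H_{j+1}}$ along the chain of Sobolev conjugates until either the differentiability index is exhausted (case~(1)) or the Morrey regime $\int_1^\infty (H_{m_0})^{-1}(s)\,s^{-1-1/n}\,ds<\infty$ is reached (case~(2))---so at the level of architecture there is nothing to compare: you are reproducing the cited argument rather than departing from it. One minor remark: Donaldson--Trudinger obtain the base first-order embedding via potential representations and kernel estimates rather than via symmetric rearrangement and P\'olya--Szeg\H{o}; the rearrangement route you invoke is closer to Cianchi's sharp version, but for the purposes of this paper either reference suffices as a black box, and the iteration on top of it is the same.
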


In this paper we will consider the second order case $m=2$. 

For the construction of some auxiliary problems, we need to take into account  some boundary conditions. To this end, we denote $W^{2, G}_0(\Omega)$ the closure of $C_0^{\infty}(\Omega)$ in $W^{2, G}(\Omega)$. 

By \cite{WW},  the norm $\|u\|_{2, G}$ in $W^{2, G}_0(\Omega)$ is equivalent to
$$
\|u\|_{2, G}\sim \|\Delta u\|_G.
$$

From now, we will consider the norm $\|\Delta u\|_{G}$ in the space $W^{2, G}_0(\Omega)$.  The relevant modular defined on $W^{2, G}_0(\Omega)$ is given by
$$
\rho_{2,G}(u):=\rho_{G}(\Delta u)=\int_\Omega G(\Delta u)\,dx.
$$

To close the section, we quote the following further  useful relation between modulars and norms.
\begin{lemma}\label{comp norm modular}
        Let $G$ be an N-function satisfying \eqref{G1},  and let 
        $\xi^\pm\colon[0,\infty)$ $\to\mathbb{R}$ be defined as
        \[
            \xi^{-}(t):= 
            \min \big \{  t^{p^{-}}, t^{p^{+}} \big  \} ,
            \quad \text{ and }  \quad
            \xi^{+}(t):=\max \big \{  t^{p^{-}}, t^{p^{+}} \big \} . 
         \] 
         Then
   
             $$\xi^{-}(\|u\|_{2, G}) \leq \rho_{2,G}( u) 
                    \leq  \xi^{+}(\|u\|_{2, G}).$$
    \end{lemma}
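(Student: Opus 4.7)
The proof proposal for Lemma \ref{comp norm modular} is the following. Since $\|u\|_{2,G}=\|\Delta u\|_{G}$ and $\rho_{2,G}(u)=\rho_{G}(\Delta u)$, setting $v:=\Delta u$ reduces the claim to the standard comparison
\[
\xi^{-}(\|v\|_{G})\le \rho_{G}(v)\le \xi^{+}(\|v\|_{G}),\qquad v\in L^{G}(\Omega),
\]
and the full statement follows by taking $v=\Delta u$. So the plan is to prove this scalar version.

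I would first dispose of the trivial case $v\equiv 0$. Assume $\lambda:=\|v\|_{G}>0$. The main tool is inequality \eqref{ineq at}, which, applied with $s=\lambda$ and $t=v(x)/\lambda$, gives pointwise
\[
\xi^{-}(\lambda)\,G\!\left(\frac{v(x)}{\lambda}\right)\le G(v(x))\le \xi^{+}(\lambda)\,G\!\left(\frac{v(x)}{\lambda}\right).
\]
Integrating over $\Omega$ yields
\[
\xi^{-}(\lambda)\int_{\Omega}G\!\left(\frac{v}{\lambda}\right)dx\le \rho_{G}(v)\le \xi^{+}(\lambda)\int_{\Omega}G\!\left(\frac{v}{\lambda}\right)dx.
\]
So everything is reduced to understanding $\int_{\Omega}G(v/\lambda)\,dx$ when $\lambda=\|v\|_{G}$.

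For the upper bound it suffices to recall that, by definition of the Luxemburg norm, $\int_{\Omega}G(v/\lambda)\,dx\le 1$, which immediately gives $\rho_{G}(v)\le \xi^{+}(\lambda)=\xi^{+}(\|v\|_{G})$. For the lower bound I need the matching equality $\int_{\Omega}G(v/\lambda)\,dx=1$. This is where the $\Delta_{2}$ condition on $G$ (guaranteed by \eqref{G1}) enters: under $\Delta_{2}$, the modular $\rho_{G}$ is finite on $L^{G}(\Omega)$ and continuous with respect to the Luxemburg norm, so the infimum in the definition of $\|v\|_{G}$ is attained, i.e.\ $\rho_{G}(v/\lambda)=1$. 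Plugging this into the lower bound gives $\rho_{G}(v)\ge \xi^{-}(\lambda)=\xi^{-}(\|v\|_{G})$.

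I do not anticipate any real obstacle; the only point that requires a brief word is the equality $\rho_{G}(v/\|v\|_{G})=1$, which is a classical consequence of the $\Delta_{2}$ hypothesis and could alternatively be proved by taking $\lambda_{k}\downarrow \|v\|_{G}$, noting $\rho_{G}(v/\lambda_{k})\le 1$, and passing to the limit via monotone/dominated convergence using the pointwise monotonicity of $G$. Everything else is a direct integration of the two-sided estimate \eqref{ineq at}.
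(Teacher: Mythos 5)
Your proof is correct, and the paper actually states Lemma~\ref{comp norm modular} without giving a proof (it is a classical norm/modular comparison in Orlicz space theory), so there is no in-paper argument to compare against. Your route is the standard one: reduce to $v=\Delta u$, apply the two-sided scaling inequality \eqref{ineq at} with $s=\lambda=\|v\|_G$ and $t=|v(x)|/\lambda$, integrate, and use $\rho_G(v/\lambda)=1$. The $\Delta_2$ condition, supplied here by \eqref{G1}, is indeed what makes $\rho_G(v/\|v\|_G)=1$ hold rather than merely $\le 1$.

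One small imprecision worth flagging: the ``alternative'' argument you sketch at the end --- taking $\lambda_k\downarrow\|v\|_G$ with $\rho_G(v/\lambda_k)\le 1$ and passing to the limit by monotone convergence --- only yields $\rho_G(v/\|v\|_G)\le 1$ (which is the side needed for the \emph{upper} bound and holds for any $N$-function). The reverse inequality $\rho_G(v/\|v\|_G)\ge 1$, which is what the lower bound $\xi^-(\|v\|_G)\le\rho_G(v)$ actually requires, does not follow from that limit; it genuinely uses $\Delta_2$, e.g.\ by taking $\lambda_k\uparrow\|v\|_G$, invoking dominated convergence (valid since $\rho_G(v/\lambda_1)<\infty$ under $\Delta_2$) to conclude that $\rho_G(v/\lambda)<1$ would persist for some $\lambda<\|v\|_G$, contradicting the definition of the Luxemburg infimum. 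Your primary argument (continuity and strict monotonicity of $\lambda\mapsto\rho_G(v/\lambda)$ under $\Delta_2$) is the right one; just be careful not to present the one-sided monotone-convergence step as a full alternative proof of the equality.
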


\subsection*{Concept of solution}
Let us state the definition of solution that will be used throughout the paper:
\begin{definition}\label{defi weak}We say that $u\in W_{loc}^{2, G}(\Omega)$ is a local weak solution of \eqref{eq} in $\Omega$ if and only if
$$\int_\Omega g(\Delta u)\Delta u\Delta \varphi\,dx =\int_\Omega g(f)f\Delta \varphi\,dx, $$for any $\varphi\in C_{0}^{\infty}(\Omega)$. 
\end{definition}

\begin{remark}Observe that by density, we may take $\varphi$ in $W_0^{2, G}(\Omega)$ in Definition \ref{defi weak}. 
\end{remark}
  
\subsection*{Maximal functions and a Vitali Lemma}
In this section, we will include the properties of the maximal functions that we employ. Moreover, we state a modified Vitali Lemma from \cite{W}.   

 \begin{definition}Let $f\in L^1_{loc}(\Omega)$. The Hardy-Littlewood maximal function denoted by $\mathcal{M}(f)$ is defined by
 $$\mathcal{M}(x)=\sup_{r>0}\fint_{B_r(x)}f(x)\,dx.$$ 
 \end{definition}The following are standard properties of the maximal function: given $f\in L^q(\Omega), q >1,$
 \begin{itemize}
 \item $\|\mathcal{M}(f)\|_{L^q(\Omega)}\leq \|f\|_{L^q(\Omega)}$,
 \item for any $\lambda>0$, $$|\left\lbrace x\in \Omega: \mathcal{M}(f)(x)> \lambda\right\rbrace| \leq \dfrac{C}{\lambda}\int_{\Omega}|f(x)|\,dx,$$
 \end{itemize}
 
 We quote a modified version of the Vitali Lemma from \cite{W}:
 
 \begin{lemma}\label{vitali}
 Let $\varepsilon\in (0, 1)$ and let $C\subset D\subset B_1$ be two measurable sets with $|C|< \varepsilon |B_1|$ and satisfying the following property: 
$$
\text{for every  $x\in B_1$ with } |C\cap B_{r}(x)|\geq \varepsilon|B_r|, \text{ there holds } B_r(x)\cap B_1\subset D.
$$ 
Then,
$$
|C|\leq 10^n\varepsilon|D|.
$$
 \end{lemma}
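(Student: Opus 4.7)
The plan is to prove this via a classical stopping-time / Calderón-Zygmund style covering argument, very much in the spirit of Wang's original version, but adapted to handle the restriction to $B_1$ (which is the source of the constant $10^n = 2^n \cdot 5^n$).

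First I would, for almost every $x\in C$, define the \emph{critical radius}
\[
r_x := \sup\bigl\{ r>0 \;:\; |C\cap B_r(x)| \ge \varepsilon|B_r|\bigr\}.
\]
I need to check this is a well-defined positive finite number. For the lower bound, the Lebesgue differentiation theorem gives, at a.e.\ density point of $C$, $|C\cap B_r(x)|/|B_r|\to 1>\varepsilon$ as $r\to 0$, so the set in the sup is nonempty. For the upper bound, note that when $x\in B_1$ and $r\ge 2$ we have $B_r(x)\supset B_1$, hence $|C\cap B_r(x)|=|C|<\varepsilon|B_1|\le \varepsilon|B_r|$. So $r_x\in(0,2]$, and by continuity of $r\mapsto |C\cap B_r(x)|$ the supremum is attained with $|C\cap B_{r_x}(x)|=\varepsilon|B_{r_x}|$. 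In particular $r_x$ satisfies the density condition, so by hypothesis $B_{r_x}(x)\cap B_1\subset D$.

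Next I would apply the standard Vitali covering lemma to the family $\{B_{r_x}(x)\}_{x\in C}$, extracting a countable disjoint subfamily $\{B_{r_{x_i}}(x_i)\}_i$ such that the fivefold enlargements cover almost all of $C$:
\[
C \subset \bigcup_i B_{5r_{x_i}}(x_i) \quad \text{up to a null set.}
\]
By maximality of each $r_{x_i}$, and since $5r_{x_i}>r_{x_i}$,
\[
|C\cap B_{5r_{x_i}}(x_i)| < \varepsilon|B_{5r_{x_i}}| = 5^n\varepsilon|B_{r_{x_i}}|.
\]
Summing,
\[
|C|\le \sum_i |C\cap B_{5r_{x_i}}(x_i)| \le 5^n \varepsilon \sum_i |B_{r_{x_i}}|.
\]

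The remaining step is to convert $\sum_i |B_{r_{x_i}}|$ into $|D|$, and the only point where a factor is lost is the geometric fact that a ball centered in $B_1$ may stick out of $B_1$. I would prove the following elementary claim: for every $x\in B_1$ and every $r\in(0,2]$,
\[
|B_r(x)\cap B_1|\ge 2^{-n}|B_r|.
\]
This is a short case-check: if $|x|\ge r/2$, the point $y=x-(r/2)\,x/|x|$ lies in $B_1$ with $B_{r/2}(y)\subset B_r(x)\cap B_1$; if $|x|<r/2$, then $B_{r/2}(0)\subset B_r(x)\cap B_1$ (using $r\le 2$ so $B_{r/2}(0)\subset B_1$). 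Either way, a half-radius ball fits inside $B_r(x)\cap B_1$, giving the $2^{-n}$ factor.

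Applied to each $x_i\in B_1$ with $r_{x_i}\le 2$, together with the inclusion $B_{r_{x_i}}(x_i)\cap B_1\subset D$ and the disjointness of the $B_{r_{x_i}}(x_i)$, this yields
\[
\sum_i |B_{r_{x_i}}| \le 2^n\sum_i |B_{r_{x_i}}(x_i)\cap B_1| \le 2^n \sum_i |D\cap B_{r_{x_i}}(x_i)| \le 2^n |D|.
\]
Combining with the previous inequality gives $|C|\le 5^n\cdot 2^n\,\varepsilon\,|D| = 10^n\varepsilon|D|$.

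The step I expect to require the most care is showing that $r_x$ is well defined and bounded (in particular the upper bound $r_x\le 2$, which is essential to apply the geometric claim); the Vitali covering and the chord-of-$B_1$ estimate are otherwise routine. The constant $10^n$ is tight to this argument, with $5^n$ coming from the Vitali enlargement and $2^n$ from the boundary correction for $B_1$.
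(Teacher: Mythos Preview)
Your proof is correct and follows the standard stopping-time/Vitali covering argument; note that the paper does not actually prove this lemma but merely quotes it from \cite{W}, and your argument is essentially the one given there. The only minor remark is that your bound $r_x\le 2$ is in fact strict (since $|C\cap B_2(x)|=|C|<\varepsilon|B_1|<\varepsilon|B_2|$), but this has no effect on the argument.
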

 
 Finally, we state an iteration lemma from \cite{Gi}:
 \begin{lemma}\label{iteration}
 Assume that $\varphi$ is a non-negative,
real-valued, bounded function defined on an interval $[r, R] \subset \mathbb{R}_+$. Assume
further that for all $r\leq \rho<\sigma \leq R$  we have
$$\varphi(\rho)\leq A_1(\sigma-\rho)^{-\alpha_1}+A_2(\sigma-\rho)^{-\alpha_2}+A_3 + \theta \varphi(\sigma),$$for $\theta \in (0, 1)$ and positive constant $A_1$, $A_2$, $A_3$ and $\alpha_1\leq \alpha_2$. Then, there is a constant $C(\alpha_2, \theta)>0$ such that
$$\varphi(r)\leq C(\alpha_2, \theta)\left(A_1(R-r)^{-\alpha_1}+A_2(R-r)^{-\alpha_2}+A_3 \right)$$
 \end{lemma}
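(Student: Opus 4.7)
The plan is the classical telescoping argument for lemmas of this form. First I would choose a contraction parameter $\tau \in (0,1)$ close enough to $1$ so that $\theta\,\tau^{-\alpha_2} < 1$; since $\alpha_1 \leq \alpha_2$ and $\tau < 1$, this automatically gives $\theta\,\tau^{-\alpha_1} < 1$ as well. Using this $\tau$, I would define the monotone increasing sequence of radii $\rho_0 = r$ and $\rho_{i+1} = \rho_i + (1-\tau)\tau^i(R-r)$, which satisfies $\rho_i \nearrow R$ together with the crucial gap identity $\rho_{i+1}-\rho_i = (1-\tau)\tau^i(R-r)$ (so that $\sum_i (\rho_{i+1}-\rho_i) = R-r$).

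The next step is to apply the hypothesis with $\rho = \rho_i$ and $\sigma = \rho_{i+1}$ and iterate. One application gives
$$\varphi(\rho_i) \leq \theta\,\varphi(\rho_{i+1}) + C_1\,A_1\,\tau^{-i\alpha_1}(R-r)^{-\alpha_1} + C_2\,A_2\,\tau^{-i\alpha_2}(R-r)^{-\alpha_2} + A_3,$$
with $C_j = (1-\tau)^{-\alpha_j}$. Telescoping $n$ times produces
$$\varphi(r) \leq \theta^n\,\varphi(\rho_n) + \sum_{i=0}^{n-1}\theta^i\!\left[C_1 A_1 \tau^{-i\alpha_1}(R-r)^{-\alpha_1} + C_2 A_2 \tau^{-i\alpha_2}(R-r)^{-\alpha_2} + A_3\right].$$

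Finally I would pass to the limit $n \to \infty$. The boundedness of $\varphi$ together with $\theta \in (0,1)$ forces $\theta^n\,\varphi(\rho_n) \to 0$, and by the calibration of $\tau$ the three geometric series with ratios $\theta\tau^{-\alpha_1}$, $\theta\tau^{-\alpha_2}$, and $\theta$ all converge to constants depending only on $\alpha_2$ and $\theta$. Collecting these into a single $C(\alpha_2,\theta)$ yields the desired estimate. The main (and really only) obstacle is the initial calibration of $\tau$: it must be chosen once and for all so that the worst geometric ratio $\theta\tau^{-\alpha_2}$ lies strictly below $1$; once this is done, the boundedness hypothesis on $\varphi$ (used only to kill the tail term $\theta^n\varphi(\rho_n)$) closes the argument.
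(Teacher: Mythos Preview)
Your argument is correct and is exactly the classical telescoping proof of this iteration lemma. The paper does not give its own proof here; it simply quotes the statement from Giaquinta \cite{Gi}, and the proof in that reference proceeds precisely along the lines you describe: choose $\tau\in(0,1)$ with $\theta\,\tau^{-\alpha_2}<1$, set $\rho_{i+1}-\rho_i=(1-\tau)\tau^{i}(R-r)$, iterate, and sum the resulting geometric series while killing $\theta^{n}\varphi(\rho_n)$ via the boundedness of $\varphi$.
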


\section{Calder\'on-Zygmung regularity: the case $q=1$}\label{sec q=1}
   
We will start by proving the main result for $q=1$.
\begin{theorem}\label{theorem q 1}Let $G$ be an $N$-function satisfying \eqref{g} and \eqref{G}.   Let $u\in W_{loc}^{2, G}(\Omega)$ be a local weak solution of \eqref{eq}. 

Then, there is $C>0$ such that
\begin{equation}\label{estimete for q 1}
\int_{B_r(x_0)}G(\Delta u)\,dx \leq C\left(\int_{B_{2r}(x_0)}G\left(\dfrac{1}{r^2} u\right)\,dx+\int_{B_{2r}(x_0)}G\left(\dfrac{1}{r}|\nabla u|\right)\,dx + \int_{B_{2r}(x_0)}G(f)\,dx\right),
\end{equation}
for any $r$ and $x_0$ such that $B_{4r}(x_0)\subset \Omega$.
\end{theorem}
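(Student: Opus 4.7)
The argument is a Caccioppoli-type estimate: test Definition \ref{defi weak} with $\varphi=\eta^{\alpha} u$, where $\eta\in C_c^\infty(B_{2r}(x_0))$ satisfies $\eta\equiv 1$ on $B_r(x_0)$, $|\nabla\eta|\le C/r$, $|D^2\eta|\le C/r^2$, and $\alpha=\alpha(p^+)$ is chosen large enough below. This $\varphi$ lies in $W_0^{2,G}(\Omega)$ and is admissible by the remark after Definition \ref{defi weak}. Expanding
\[
\Delta(\eta^\alpha u)=\eta^\alpha\Delta u+2\alpha\eta^{\alpha-1}\nabla\eta\cdot\nabla u + u\bigl(\alpha(\alpha-1)\eta^{\alpha-2}|\nabla\eta|^2+\alpha\eta^{\alpha-1}\Delta\eta\bigr)
\]
and inserting into the weak formulation, I rearrange as
\[
\int\eta^\alpha g(\Delta u)(\Delta u)^2\,dx \le |\mathcal{E}[u,\eta]| + \left|\int g(f)f\,\Delta(\eta^\alpha u)\,dx\right|,
\]
where $\mathcal{E}$ collects the mixed error terms coupling $g(\Delta u)\Delta u$ with derivatives of $\eta$ acting on $u$ and $\nabla u$. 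By \eqref{G1}, the left-hand side is bounded below by $p^{-}\!\int\eta^\alpha G(\Delta u)\,dx$.

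Each error term from $\mathcal{E}$ has the form $\int \eta^{\alpha-k}|D^k\eta|\,g(\Delta u)\Delta u\,v\,dx$ with $k\in\{1,2\}$ and $v\in\{|\nabla u|,|u|\}$. I apply \eqref{Young} to the pointwise product with $s=\eta^{\alpha-k}g(\Delta u)\Delta u$ and $t=|D^k\eta||v|$:
\[
\int \eta^{\alpha-k}|D^k\eta|g(\Delta u)\Delta u\,v\,dx\le \varepsilon\!\int\widetilde{G}\bigl(\eta^{\alpha-k}g(\Delta u)\Delta u\bigr)dx+C_\varepsilon\!\int G(|D^k\eta||v|)\,dx.
\]
By \eqref{ineq at} applied to $\widetilde{G}$ (which is in $\Delta_2$ thanks to \eqref{g}) together with Lemma \ref{G g},
\[
\widetilde{G}\bigl(\eta^{\alpha-k}g(\Delta u)\Delta u\bigr)\le (p^+-1)\eta^{(\alpha-k)(p^+)'}G(\Delta u).
\]
The exponent inequality $(\alpha-k)(p^+)'\ge \alpha$ is equivalent to $\alpha\ge kp^+$; taking $\alpha=2p^+$ handles both $k=1,2$, yielding $\eta^{(\alpha-k)(p^+)'}\le \eta^\alpha$ pointwise so that this piece can be absorbed into the LHS. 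The complementary piece, using $|D^k\eta|\le C/r^k$ and \eqref{ineq at}, is controlled by $C_\varepsilon\!\int_{B_{2r}}G(|v|/r^k)\,dx$, producing the $G(u/r^2)$ and $G(|\nabla u|/r)$ contributions. For the RHS $\int g(f)f\,\Delta(\eta^\alpha u)\,dx$, the main term $\int \eta^\alpha g(f)f\,\Delta u\,dx$ is bounded via the dual form of \eqref{Young} (available by swapping the roles of $s,t$ in \eqref{2.5} prior to rescaling), namely $ab\le \varepsilon G(a)+C_\varepsilon \widetilde G(b)$, combined with Lemma \ref{G g}; this gives $\varepsilon\!\int\eta^\alpha G(\Delta u)\,dx+C_\varepsilon\!\int_{B_{2r}}G(f)\,dx$. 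The subleading pieces involving $\nabla u$ and $u$ are handled analogously and produce only further $G(|\nabla u|/r)$, $G(u/r^2)$ and $G(f)$ contributions, with no absorption issues since they do not involve $G(\Delta u)$.

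Collecting these bounds and choosing $\varepsilon$ small enough that the total coefficient of $\int\eta^\alpha G(\Delta u)\,dx$ on the right is at most $p^{-}/2$, I absorb into the LHS. Since $\eta\equiv 1$ on $B_r(x_0)$, $\int_{B_r(x_0)}G(\Delta u)\,dx\le \int \eta^\alpha G(\Delta u)\,dx$, so \eqref{estimete for q 1} follows. The main technical obstacle is the Orlicz bookkeeping in the Young step: the cutoff factor $\eta^{\alpha-k}$ cannot be distributed linearly through the nonlinear $\widetilde{G}$, so one must exploit the sharp scaling \eqref{ineq at} for $\widetilde{G}$ together with the equivalence $\widetilde{G}(g(t)t)\sim G(t)$ from Lemma \ref{G g} and the judicious choice $\alpha\ge 2p^+$ to keep the $G(\Delta u)$ remainders dominated by $\eta^\alpha G(\Delta u)$ and hence absorbable.
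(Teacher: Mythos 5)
Your proof is correct, but it takes a genuinely different route from the paper's. The paper tests with $\xi u$ where $\xi$ is a first-power cutoff sitting between two nested balls $B_\sigma\subset B_\rho$, which yields an inequality of the form
$\int_{B_\sigma}G(\Delta u)\le C\varepsilon\int_{B_\rho}G(\Delta u)+\text{data}$,
that cannot be absorbed directly because the two sides live on different balls; the estimate is then closed by invoking Giaquinta's iteration Lemma~\ref{iteration}. You instead test with $\eta^\alpha u$, $\alpha=2p^+$, and arrange things so that every $G(\Delta u)$ remainder is accompanied by a power $\eta^{(\alpha-k)(p^+)'}\le\eta^\alpha$, hence is pointwise dominated by the left-hand side and can be absorbed on the spot, bypassing the iteration lemma entirely. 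The exponent arithmetic $(\alpha-k)(p^+)'\ge\alpha\iff\alpha\ge kp^+$ is correct, and the pointwise bound $\widetilde G(\eta^{\alpha-k}g(\Delta u)\Delta u)\le(p^+-1)\eta^{(\alpha-k)(p^+)'}G(\Delta u)$ does follow once one notes that the scaling inequality \eqref{ineq at} transfers to $\widetilde G$ with the conjugate indices $(p^+)'\le(p^-)'$ --- a fact implicit in the paper (since $\widetilde G$ is also $\Delta_2\cap\nabla_2$ with dual Simonenko indices) but never stated; it is precisely the extra ingredient that lets you absorb where the paper cannot. Two cosmetic points: in your expansion of $\Delta(\eta^\alpha u)$ the term $\alpha\eta^{\alpha-1}u\,\Delta\eta$ carries $\eta^{\alpha-1}$ but a second derivative of $\eta$ (so the $r$-scaling is $r^{-2}$ while the $\eta$-power is $\alpha-1$), so your uniform label ``$\eta^{\alpha-k}|D^k\eta|$'' is slightly off --- this does not affect the argument since $\alpha=2p^+$ covers the worst case; and one should either take $\alpha=\lceil 2p^+\rceil$ an integer or observe $\eta^\alpha$ is still $C^2$ so that $\Delta(\eta^\alpha u)\in L^G$. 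Both approaches are standard Caccioppoli arguments; the paper's is lighter on Orlicz bookkeeping at the cost of an extra lemma, yours is self-contained once the $\widetilde G$-scaling is recorded.
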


\begin{proof} We will prove the estimate \eqref{estimete for q 1} for $r=1$. The result for an arbitrary ball $B_r(x_0)\subset B_{4r}(x_0)\subset \Omega$ will follows by making the change of variables
$$
u_r(x)=\dfrac{1}{r^2}u(rx).
$$

Let $1<\sigma< \rho < 2$ and take a cut-off function  $\xi \in C_0^{\infty}(B_\rho)$, $\xi = 1$ in $B_\sigma$, $0\leq \xi\leq 1$ in $\mathbb{R}^n$ such that
\begin{equation}\label{est gradient laplace}
|\nabla \xi|\leq \dfrac{C}{\rho-\sigma}, \quad |\Delta \xi|\leq \dfrac{C}{(\rho-\sigma)^2}.
\end{equation}   
Then $\xi u\in W_0^{2, G}(B_2)$ and so it can be used as a test function in Definition \ref{defi weak}. Using that
$$
\nabla(\xi u)= \xi \nabla u + u\nabla \xi \quad \text{ and }\quad \Delta (u \xi)= \xi \Delta u + 2 \nabla u \nabla \xi + u\Delta \xi,
$$
we obtain
\begin{equation}\label{calc}
\begin{split}
\int_{B_2}\xi g(|\Delta u|)|\Delta u|^2\,dx =& -\int_{B_2}ug(\Delta u)\Delta u\Delta \xi\,dx -2\int_{B_2}g(\Delta u)\Delta u\nabla u \cdot\nabla \xi\,dx \\ &  +\int_{B_2}\xi g(f)f\Delta u\,dx + \int_{B_2}ug(f)f\Delta \xi\,dx +2\int_{B_2}g(f)f\nabla u\cdot \nabla \xi \,dx\\
&= I + II + III + IV + V. 
\end{split}   
\end{equation}Since $\xi=1$ in $B_\sigma$, $\xi \in C_0^{\infty}(B_\rho)$ and by \eqref{G1}, we have
\begin{equation*}
p^{-}\int_{B_\sigma}G(\Delta u)\,dx \leq \int_{B_\rho}\xi g(|\Delta u|)|\Delta u|^2\,dx.
\end{equation*}In the following estimates we appeal to  Young's inequality with $\varepsilon>0$, see \eqref{Young}, the bounds \eqref{est gradient laplace} and Lemma \ref{G g}:
\begin{align*}
|I| = \left|\int_{B_\rho}ug(\Delta u)\Delta u\Delta \xi\,dx\right| \leq& \varepsilon\int_{B_\rho}\tilde{G}(g(\Delta u)\Delta u)\,dx+C_\varepsilon(\rho-\sigma)^{-2p^+}\int_{B_\rho}G(u)\,dx\\ 
\leq & C\varepsilon \int_{B_\rho}G(\Delta u)\,dx+C_\varepsilon(\rho-\sigma)^{-2p^+}\int_{B_\rho}G(u)\,dx;
\end{align*}
$$
|II| = \left|\int_{B_\rho}g(\Delta u)\Delta u\nabla u \cdot\nabla \xi\,dx\right| \leq C\varepsilon \int_{B_\rho}G(\Delta u)\,dx+C_\varepsilon(\rho-\sigma)^{-p^{+}}\int_{B_\rho}G(|\nabla u|)\,dx;
$$
\begin{align*}
|III| &= \left|\int_{B_\rho}\xi g(f)f\Delta u\,dx\right|\leq C_\varepsilon\int_{B_\rho}G(f)\,dx + \varepsilon\int_{B_\rho}G(\Delta u)\,dx;\\
|IV| &= \left|\int_{B_\rho}ug(f)f\Delta \xi\,dx\right|\leq C\varepsilon\int_{B_\rho}G(f)\,dx + C_\varepsilon(\rho-\sigma)^{-2p^+}\int_{B_\rho}G(u)\,dx;\\
|V| &= \left|\int_{B_\rho}g(f)f\nabla u\cdot \nabla \xi \,dx\right|\leq C\varepsilon\int_{B_\rho}G(f)\,dx + C_\varepsilon(\rho-\sigma)^{-p^+}\int_{B_\rho}G(|\nabla u|)\,dx.
\end{align*}

Hence, we obtain
\begin{align*}
\int_{B_\sigma}G(\Delta u)\,dx \leq& C\varepsilon \int_{B_\rho}G(\Delta u)\,dx+ C_\varepsilon(\rho-\sigma)^{-2p^+}\int_{B_\rho}G( u)\,dx\\
& + C_\varepsilon(\rho-\sigma)^{-p^+}\int_{B_\rho}G(|\nabla u|)\,dx + C_\varepsilon\int_{B_\rho}G(f)\,dx,
\end{align*}
and so by Lemma \ref{iteration}, there is a constant $C(p^{+}, \varepsilon)>0$ such that
$$
\int_{B_1}G(\Delta u)\,dx \leq C(p^+, \varepsilon)\left(\int_{B_2}G(f)\,dx+\int_{B_2}G(u)\,dx+ \int_{B_2}G(|\nabla u|)\,dx\right).
$$
This ends the proof.
\end{proof}
   
\section{Calder\'on-Zygmung regularity: higher order integrability}\label{sec main}
   
In this section, we will prove Theorem \ref{main theorem}. We will drive the proof into some steps.
   
\subsection*{Step 1: approximating sequence and preliminary estimates}

Given $\varepsilon>0$, define
$$g_\varepsilon(t):=g(\sqrt{\varepsilon+t^2}).$$
Then, for 
$$G_\varepsilon(t):=\int_0^t g_\varepsilon(s)s\,ds,$$we have that $G_\varepsilon$ is an N-function and moreover
$$L^{G_\varepsilon}(\Omega)=L^G(\Omega)$$and 
$$W^{2, G_\varepsilon}(\Omega)=W^{2, G}(\Omega).$$

Let $B_2\subset \Omega$. Consider the following boundary value problem:
\begin{equation}\label{appox solution}
\begin{cases}
\Delta\left(g_\varepsilon(\Delta v^\varepsilon)\Delta v^\varepsilon\right)=0 \quad\text{in }B_2\\
v^\varepsilon-u\in W_0^{2, G}(B_2)
\end{cases}
\end{equation}  The existence of $v^\varepsilon$ is proved in the next proposition.

 \begin{proposition}\label{existence appox}
  Assume that $u\in W_{loc}^{2, G}(\Omega)$ is a local weak solution of \eqref{eq} in $\Omega$. Let $B_2\subset \Omega$. Then, for any $\varepsilon>0$, there is $v^{\varepsilon}\in W^{2, G}(B_2)$ solving \eqref{appox solution}  and such that
  \begin{equation}\label{uniform integral estimate}
  \int_{B_2}G_\varepsilon(\Delta v^\varepsilon)\,dx \leq C\left(\int_{B_2}G(\Delta u)\,dx +1, \right)
  \end{equation}where the constant $C>0$ is independent of $\varepsilon$. 
   \end{proposition}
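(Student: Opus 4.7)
The boundary value problem \eqref{appox solution} is the Euler--Lagrange equation associated to the strictly convex variational problem of minimizing
\[
J_\varepsilon(v):=\int_{B_2} G_\varepsilon(\Delta v)\,dx
\]
over the affine class $\mathcal{A}:=u+W_0^{2,G}(B_2)$. Accordingly, my plan is to run the direct method of the calculus of variations to produce a unique minimizer $v^\varepsilon\in\mathcal{A}$, interpret its first variation as the weak form of \eqref{appox solution}, and then read off the uniform bound \eqref{uniform integral estimate} from the minimality against the admissible competitor $u$ itself.

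\textbf{Setup and existence.} First I would record the identity obtained via the substitution $r=\sqrt{\varepsilon+s^2}$ in the defining integral of $G_\varepsilon$, namely
\[
G_\varepsilon(t)=G\!\left(\sqrt{\varepsilon+t^2}\right)-G(\sqrt{\varepsilon}).
\]
Combined with the $\Delta_2$ property of $G$ (a consequence of \eqref{g} via \eqref{G1}), this shows $G_\varepsilon$ is itself an $N$-function satisfying $\Delta_2$, so that $L^{G_\varepsilon}(B_2)=L^G(B_2)$ and $W_0^{2,G_\varepsilon}(B_2)=W_0^{2,G}(B_2)$ with equivalent Luxemburg norms. The functional $J_\varepsilon$ is then strictly convex (because $g_\varepsilon>0$ makes $G_\varepsilon$ strictly convex), continuous on $W^{2,G}(B_2)$, hence weakly sequentially lower semicontinuous on this reflexive Banach space. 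Coercivity on $\mathcal{A}$ follows from Lemma \ref{comp norm modular} applied to $G_\varepsilon$ together with the equivalence $\|\cdot\|_{2,G}\sim \|\Delta\cdot\|_G$ on $W_0^{2,G}(B_2)$: if $\phi\in W_0^{2,G}(B_2)$ with $\|\phi\|_{2,G}$ large, then $\|\Delta(u+\phi)\|_G$ is also large, forcing $J_\varepsilon(u+\phi)\to\infty$. The direct method thus produces a unique minimizer $v^\varepsilon\in\mathcal{A}$.

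\textbf{Euler--Lagrange equation.} For any $\varphi\in W_0^{2,G}(B_2)$ I would differentiate $s\mapsto J_\varepsilon(v^\varepsilon+s\varphi)$ at $s=0$. Differentiation under the integral sign is legitimate because the difference quotients are dominated by an integrable function constructed via Young's inequality \eqref{Young} (applied to $G_\varepsilon$) and Lemma \ref{G g}, exploiting that $g_\varepsilon(\Delta v^\varepsilon)\Delta v^\varepsilon\in L^{\widetilde{G}_\varepsilon}(B_2)$ whenever $\Delta v^\varepsilon\in L^{G_\varepsilon}(B_2)$. Setting the derivative to zero yields
\[
\int_{B_2}g_\varepsilon(\Delta v^\varepsilon)\Delta v^\varepsilon\,\Delta\varphi\,dx=0\qquad\forall\,\varphi\in W_0^{2,G}(B_2),
\]
which is exactly the weak formulation of \eqref{appox solution}.

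\textbf{Uniform estimate.} Since $u\in\mathcal{A}$ (as $u-u=0\in W_0^{2,G}(B_2)$), minimality gives $\int_{B_2}G_\varepsilon(\Delta v^\varepsilon)\,dx\le \int_{B_2}G_\varepsilon(\Delta u)\,dx$. Restricting to $\varepsilon\in(0,1)$ (the regime of interest for the approximation) and using $\sqrt{\varepsilon+t^2}\le 1+|t|$ together with the $\Delta_2$ property of $G$, the identity for $G_\varepsilon$ above gives
\[
G_\varepsilon(t)\le G\!\left(\sqrt{\varepsilon+t^2}\right)\le G(1+|t|)\le C\bigl(G(1)+G(t)\bigr),
\]
with $C$ independent of $\varepsilon$. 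Integrating over $B_2$ produces \eqref{uniform integral estimate}. The only delicate point I foresee is ensuring that the comparison between $G$ and $G_\varepsilon$ is uniform in $\varepsilon$; the explicit identity $G_\varepsilon(t)=G(\sqrt{\varepsilon+t^2})-G(\sqrt{\varepsilon})$ resolves this cleanly and is the real workhorse of the estimate.
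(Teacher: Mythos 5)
Your proposal is correct and follows essentially the same route as the paper: the direct method of the calculus of variations applied to $\mathcal{F}_\varepsilon(v)=\int_{B_2}G_\varepsilon(\Delta v)\,dx$ on the affine class $u+W_0^{2,G}(B_2)$, using the identity $G_\varepsilon(t)=G(\sqrt{\varepsilon+t^2})-G(\sqrt{\varepsilon})$ and Lemma \ref{comp norm modular} for coercivity. Your derivation of \eqref{uniform integral estimate} — comparing the minimizer against the admissible competitor $u$ and then bounding $G_\varepsilon(t)\le G(1+|t|)\le C\bigl(G(1)+G(t)\bigr)$ via $\Delta_2$, uniformly for $\varepsilon\in(0,1)$ — is in fact more explicit than what the paper displays at that point, and it is a clean, correct way to close the argument.
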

\begin{proof}
For the proof we use the direct method of the calculus of variations. So we define the functional
$$
\mathcal{F}_{\varepsilon}(v)=\int_{B_2} G_\varepsilon (\Delta v)\, dx
$$
and we look for minima of $\mathcal{F}_{\varepsilon}$,
$$
\min_{v-u\in W_0^{2,G}(B_2)}\mathcal{F}_{\varepsilon}(v) 
$$
First, we show that $\mathcal{F}_{\varepsilon}$ is coersive. Observe that
\begin{align*}
G_\varepsilon(t)=\int_0^t sg_\varepsilon(s)\,ds=G(\sqrt{\varepsilon+t^2})-G(\sqrt{\varepsilon}),
\end{align*}
so
$$
G(t)\leq G(\sqrt{\varepsilon+t^2})= G_{\varepsilon}(t)+G(\sqrt{\varepsilon}).
$$
Suppose that $\|\Delta v\|_G>1$, then
$$
\mathcal{F}_{\varepsilon}(v)=\int_{B_2} G_{\varepsilon}(\Delta v)\, dx\geq \int_{B_2} G(\Delta v)\, dx \geq \|\Delta v\|_G^{p^-}.
$$
On the other hand
\begin{align*}
 \|\Delta v\|_G &= \|\Delta (v-u)+\Delta u\|_G \geq \|\Delta (v-u)\|_G-\|\Delta u\|_G\\
&\geq C\|v-u\|_{2,G} - C\geq C\|v\|_{2,G}-C.
\end{align*}
Observe that $\mathcal{F}_{\varepsilon}$ is bounded below, since $G_\varepsilon(t)\ge 0$.

Now, we want to prove that  $\mathcal{F}_{\varepsilon}$ is weakly semi continuous. Suppose that $w_k\rightharpoonup w$ in $W^{2,G}(B_2)$. Using that the set $\mathcal{A}:=\{v\in W^{2,G}(B_2)\colon  v=u \mbox{ in }\partial B_2\}$ is convex and strongly closed we obtain that $w\in\mathcal{A}$. Also, up to subsequence, $w_k\to w$ in $L^G(\Omega)$ and $w_k\to w$ a.e. By Egoroff Theorem, for any $\delta>0$ there exists $E_\delta>0$ such that $w_k\to w$ uniformly in $E_\delta$ and $|B_2\setminus E_\delta|<\delta$. We define 
$$
F_\delta=\left\{x\in B_2\colon |w(x)|+ |\Delta w(x)|\leq\frac{1}{\delta}\right\}.
$$
Then $|B_2\setminus F_\delta|\to 0$ as $\delta\to 0$. Taking $H_\delta=E_\delta\cap F_{\delta}$, observe that $|B_2\setminus H_\delta|\to 0$ as $\delta\to 0$. Now,
$$
\mathcal{F}_{\varepsilon}(w_k)=\int_{B_2} G_{\varepsilon}(\Delta w_k)\,dx\geq \int_{H_\delta} G_{\varepsilon}(\Delta w_k)\,dx.
$$
Using the convexity of $G_\varepsilon(t)$
$$
\int_{H_\delta} G_{\varepsilon}(\Delta w_k)\,dx\geq \int_{H_\delta} G_{\varepsilon}(\Delta w)\,dx+\int_{H_\delta} G_{\varepsilon}^\prime(\Delta w)(w_k-w)\,dx .
$$
As the second term in the right hand side goes to 0 as $k\to\infty$. Then, taking limit
$$
\liminf_{k\to\infty}\mathcal{F}_{\varepsilon}(w_k)\geq \int_{H_\delta} G_{\varepsilon}(\Delta w)\,dx\qquad\mbox{ for all }\delta>0.
$$
So, $\liminf_{k\to\infty}\mathcal{F}_{\varepsilon}(w_k)\geq \mathcal{F}_{\varepsilon}(w)$.

Finally, we show the uniqueness of the solution. The uniqueness of minimizers follows from the strict convexity of $G_\varepsilon$, so it remains to see that all solutions to \eqref{appox solution} are in fact minimizers of $\mathcal F_\varepsilon$. But this follows in a classical fashion, in fact using $u-v$ as a test function in the weak formulation of \eqref{appox solution}, we get
\begin{align*}
0&=\int_{B_2}\Delta(g(\Delta u)\Delta u)(u-v)\,dx=\int_{B_2} g(\Delta u)\Delta u (\Delta u-\Delta v)\,dx\\
&=\int_{B_2} g(\Delta u)\Delta u \Delta u\,dx-\int_{B_2} g(\Delta u)\Delta u\Delta v\,dx\\
&\geq\int_{B_2} g(\Delta u)\Delta u \Delta u\,dx-\int_{B_2}\tilde{G}(g(\Delta u)\Delta u)\, dx-\int_{B_2} G(\Delta v)\, dx
\end{align*}
where we have used the Young-type inequality \eqref{2.5} in the last step. Therefore, we arrive at
$$
\int_{B_2} G(\Delta v)\,dx\geq \int_{B_2} g(\Delta u)\Delta u \Delta u-\tilde{G}(g(\Delta u)\Delta u)\,dx=\int_{B_2} G(\Delta u)\,dx,
$$
where in the last equality we use Remark \ref{igualdad}.

This completes the proof.
\end{proof} 

\begin{remark}\label{Diff laplace} Observe that the function $w:=g_{\varepsilon}(\Delta v^\varepsilon)\Delta v^{\varepsilon}$ is a very weak solution of the equation $\Delta w=0$ in $B_2$ (see for instance \cite{ZB}) and hence, by \cite[Theorem 1.3]{ZB}, $w\in W^{2, p}_{loc}(B_2)$ for any $p\geq 1$. Hence, $w$ is a indeed a weak solution and therefore $w\in C^{\infty}(B_2)$. In addition, the function
$$
t\to g_\varepsilon(t)t
$$
is strictly monotone and of class $C^{2,\alpha}$. Then, by the inverse function Theorem, it has a differentiable inverse $h_\varepsilon\in C^{2,\beta}$ and since
$$
\Delta v^{\varepsilon}= h_{\varepsilon}(g_\varepsilon(\Delta v^{\varepsilon})\Delta v^{\varepsilon}),
$$
we derive that $v^{\varepsilon}$ is in $C^{4,\gamma}(B_2)$ for some $0<\gamma<1$.  We will use this fact in the next result.
\end{remark}

\begin{proposition}\label{maximum principle}
  Assume that $u\in W_{loc}^{2, G}(\Omega)$ is a local weak solution of \eqref{eq} in $\Omega$. Let $B_2\subset \Omega$. Then, for any $\varepsilon>0$, the solution $v^{\varepsilon}\in W^{2, G}(B_2)$ of \eqref{appox solution} satisfies for each $\delta\in (0,2)$,
\begin{equation}\label{max princ}
\sup_{B_{2-\delta}}G(\Delta v^\varepsilon)\leq C\left(\int_{B_2}G(\Delta u)\,dx +1\right)
\end{equation}where $C>0$ depends on $\delta$ but not on $\varepsilon$.
   \end{proposition}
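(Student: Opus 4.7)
By Remark \ref{Diff laplace}, the function $w := g_\varepsilon(\Delta v^\varepsilon)\Delta v^\varepsilon$ is smooth and harmonic in $B_2$, while $v^\varepsilon \in C^{4,\gamma}$. The plan is to avoid attacking the (non-obvious) subharmonicity of $G_\varepsilon(\Delta v^\varepsilon)$ directly, and instead pass to the Legendre dual: $\tilde{G}(w)$ is automatically subharmonic because $\tilde{G}$ is convex and $w$ is harmonic, and a pointwise comparison between $\tilde{G}(w)$, $G(\Delta v^\varepsilon)$ and $G_\varepsilon(\Delta v^\varepsilon)$ will transfer a mean value estimate into the desired sup bound.

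For the subharmonic step, since $\tilde{G}$ is a convex $N$-function and $w$ is harmonic,
\[
\Delta \tilde{G}(w) = \tilde{G}''(w)|\nabla w|^2 + \tilde{G}'(w)\Delta w = \tilde{G}''(w)|\nabla w|^2 \geq 0,
\]
so $\tilde{G}(w) \geq 0$ is subharmonic (and continuous) in $B_2$, and the classical mean value inequality gives, for every $x \in B_{2-\delta}$,
\[
\tilde{G}(w(x)) \leq \fint_{B_\delta(x)}\tilde{G}(w)\,dy \leq \frac{1}{|B_\delta|}\int_{B_2}\tilde{G}(w)\,dy.
\]

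For the pointwise comparison, Remark \ref{igualdad} gives the Young equality $\tilde{G}(g(t)t) = tG'(t) - G(t)$, and the two-sided bound $p^- \leq t^2 g(t)/G(t) \leq p^+$ from \eqref{G1} then yields
\[
(p^- - 1)G(t) \leq \tilde{G}(g(t)t) \leq (p^+ -1) G(t).
\]
Using the monotonicity $g' \geq 0$ implied by \eqref{G} and the evenness of $g$, for any $t \in \mathbb{R}$ one has the sandwich $g(|t|)|t| \leq g(\sqrt{\varepsilon+t^2})|t| \leq g(\sqrt{\varepsilon+t^2})\sqrt{\varepsilon+t^2}$; since $\tilde{G}$ is even and non-decreasing on $[0,\infty)$, the identity $G(\sqrt{\varepsilon+t^2}) = G_\varepsilon(t) + G(\sqrt{\varepsilon})$ and the previous display deliver, evaluated at $t = \Delta v^\varepsilon$,
\[
(p^- -1)G(\Delta v^\varepsilon) \leq \tilde{G}(w) \leq (p^+ -1)\bigl(G_\varepsilon(\Delta v^\varepsilon) + G(\sqrt{\varepsilon})\bigr).
\]

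Integrating the upper bound over $B_2$ and applying Proposition \ref{existence appox}, together with $G(\sqrt{\varepsilon}) \leq G(1)$ for $\varepsilon \leq 1$, produces $\int_{B_2}\tilde{G}(w)\,dx \leq C\bigl(\int_{B_2} G(\Delta u)\,dx + 1\bigr)$; chaining this with the mean value inequality and the lower pointwise comparison yields \eqref{max princ} with $C$ depending on $\delta, p^\pm$ but not on $\varepsilon$. The principal technical obstacle is the pointwise comparison step: $\tilde{G}$ is being applied to $g_\varepsilon(t)t$ rather than to the unshifted $g(t)t$ for which Lemma \ref{G g} is formulated, so one must carefully exploit the monotonicity assumption \eqref{G} to dominate $g_\varepsilon(t)t$ from above by $g(\sqrt{\varepsilon+t^2})\sqrt{\varepsilon+t^2}$, ensuring at the same time that the remainder $G(\sqrt{\varepsilon})$ remains harmless under the standing assumption $\varepsilon \leq 1$.
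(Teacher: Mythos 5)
Your proposal is correct, and it takes a genuinely different route from the paper. The paper proves the result by computing $\Delta\bigl(G_\varepsilon(\Delta v^\varepsilon)\bigr)$ directly: it multiplies the equation $\Delta\bigl(g_\varepsilon(\Delta v^\varepsilon)\Delta v^\varepsilon\bigr)=0$ by $\Delta v^\varepsilon$, expands the resulting second derivatives term by term, and uses the two-sided bound \eqref{g} on $tg''(t)/g'(t)$ (together with \eqref{g1}) to show $a(x)\,\Delta\bigl(G_\varepsilon(\Delta v^\varepsilon)\bigr)\ge 0$ for a uniformly elliptic coefficient $a(x)$; then it invokes the weak maximum principle from \cite{CW}. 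You instead observe that $w=g_\varepsilon(\Delta v^\varepsilon)\Delta v^\varepsilon$ is already harmonic, so $\tilde G(w)$ is automatically subharmonic by convexity of $\tilde G$ (Jensen plus the mean value property — no structural derivative bounds needed), and then you transfer this to $G(\Delta v^\varepsilon)$ pointwise via the Young equality of Remark \ref{igualdad} and \eqref{G1}, carefully sandwiching $g_\varepsilon(t)t$ between $g(|t|)|t|$ and $g(\sqrt{\varepsilon+t^2})\sqrt{\varepsilon+t^2}$ to handle the $\varepsilon$-shift. The trade-offs are worth noting: your Legendre-dual argument is cleaner and, for this particular proposition, does not use the $C^2$-type hypothesis \eqref{g} at all — only \eqref{G1} and the monotonicity of $g$ — whereas the paper's computation hinges on \eqref{g}; your version does require the restriction $\varepsilon\le 1$ (to absorb the $G(\sqrt\varepsilon)$ remainder into the "$+1$"), but since the sequel only uses $\varepsilon_k\to 0$ this is harmless and is in any case implicitly assumed in the paper's own bound \eqref{uniform integral estimate}.
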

\begin{proof}
Let us consider first
$$w=G_\varepsilon(\Delta v),$$where $v:=v^{\varepsilon}$. Then, for each $1\leq i \leq n $, and by Remark \ref{Diff laplace}, we may differentiate $w$ to get
$$
D_i w= g(\sqrt{\varepsilon+(\Delta v)^2})\Delta v D_i(\Delta v)
$$
and
\begin{equation}\label{Laplcian G epsilon}
\begin{split}
D_{ii}w =& \dfrac{g'(\sqrt{\varepsilon+(\Delta v)^2})}{\sqrt{\varepsilon+(\Delta v)^2}}(\Delta v)^2 (D_i(\Delta v))^2+g(\sqrt{\varepsilon+(\Delta v)^2})(D_i(\Delta v))^2\\
& +g(\sqrt{\varepsilon+(\Delta v)^2})\Delta v D_{ii}(\Delta v)\\
=:& I_1+I_2+I_3.
\end{split}
\end{equation}In the next lines, we will show that there is a uniformly elliptic coefficient $a=a(x)$ such that
\begin{equation}\label{eq for w}
a(x)\Delta w \geq 0, \text{ in }B_2,
\end{equation}
that is, $w$ is subharmonic.

Since
\begin{equation*}
D_i\left(g_\varepsilon(\Delta v)\Delta v\right)= \left[\dfrac{g'(\sqrt{\varepsilon+(\Delta v)^2})}{\sqrt{\varepsilon+(\Delta v)^2}}(\Delta v)^2 +g(\sqrt{\varepsilon+(\Delta v)^2})\right] D_{i}(\Delta v)
\end{equation*}we obtain by multiplication of equation \eqref{appox solution} by $\Delta v$ that 
$$
0 = \Delta v \Delta\left(g_\varepsilon(\Delta v)\Delta v\right) = \Delta v\sum_{i=1}^nD_{ii}\left(g_\varepsilon(\Delta v)\Delta v\right),
$$
but
\begin{equation}\label{j1}
\begin{split}
\Delta v D_{ii}\left(g_\varepsilon(\Delta v)\Delta v\right) =& \left(g''(\sqrt{\varepsilon+(\Delta v)^2}) -\dfrac{g'(\sqrt{\varepsilon+(\Delta v)^2})}{\sqrt{\varepsilon+(\Delta v)^2}}\right)\dfrac{(\Delta v)^4}{\varepsilon+(\Delta v)^2}[D_i(\Delta v)]^2\\
&+ 2\dfrac{g'(\sqrt{\varepsilon+(\Delta v)^2})}{\sqrt{\varepsilon+(\Delta v)^2}}(\Delta v)^2 [D_i(\Delta v)]^2  +\dfrac{g'(\sqrt{\varepsilon+(\Delta v)^2})}{\sqrt{\varepsilon+(\Delta v)^2}}(\Delta v)^3D_{ii}(\Delta v)\\
&+ \dfrac{g'(\sqrt{\varepsilon+(\Delta v)^2})}{\sqrt{\varepsilon+(\Delta v)^2}}(\Delta v)^2[D_i(\Delta v)]^2 +g(\sqrt{\varepsilon+(\Delta v)^2}) \Delta v D_{ii}(\Delta v)\\ 
 =:&J_1+J_2+J_3+J_4+J_5.
\end{split}
\end{equation}Observe that
\begin{equation}\label{j2}
J_2 =2I_1, J_4=I_1 \quad \text{and }\, J_5=I_3.
\end{equation}Moreover, if $\Delta v D_{ii}(\Delta v)\leq 0$, then $J_3\leq 0$ and we regret this term. So we may assume that $\Delta v D_{ii}(\Delta v) >0$.

Now, by \eqref{g},
$$g''(\sqrt{\varepsilon+(\Delta v)^2}) -\dfrac{g'(\sqrt{\varepsilon+(\Delta v)^2})}{\sqrt{\varepsilon+(\Delta v)^2}}\leq \dfrac{(p^+-4)g'(\sqrt{\varepsilon+(\Delta v)^2})}{\sqrt{\varepsilon+(\Delta v)^2}}$$and so
\begin{equation}\label{j3}
J_1 \leq (p^+-4)\dfrac{(\Delta v)^2}{\varepsilon+(\Delta v)^2}I_1.
\end{equation}Regarding $J_3$, we have
\begin{equation}\label{j4}
\begin{split}
J_3=\dfrac{g'(\sqrt{\varepsilon+(\Delta v)^2})}{\sqrt{\varepsilon+(\Delta v)^2}}(\Delta v)^3D_{ii}(\Delta v) &\leq (p^+-2)\dfrac{(\Delta v)^2}{\varepsilon+(\Delta v)^2}g(\sqrt{\varepsilon+(\Delta v)^2})\Delta v D_{ii}(\Delta v)\\&=(p^+-2)\dfrac{(\Delta v)^2}{\varepsilon+(\Delta v)^2}I_3.
\end{split}
\end{equation}

Therefore, combining \eqref{j1}, \eqref{j2}, \eqref{j3} and \eqref{j4}, using that $I_2\geq 0$  and $\dfrac{(\Delta v)^2}{\varepsilon+(\Delta v)^2}\leq 1$, we obtain
$$0 = \sum_{i=1}^5 J_i \leq \left(3+(p^+-4)\dfrac{(\Delta v)^2}{\varepsilon+(\Delta v)^2}\right) \Delta w,$$which proves \eqref{eq for w}.

Therefore, the estimate \eqref{max princ} follows by observing that $G(t)\leq G_\varepsilon(t)$ for all $t$, the weak maximum principle \cite[Chapter IV, Lemma 1.2]{CW} applied to $w=G_\varepsilon(\Delta v^{\varepsilon})$ and the uniform estimate \eqref{uniform integral estimate}. 
\end{proof}

\subsection*{Step 2: convergence of the approximating sequence}   In the next proposition, we show that the approximating sequence $v^{\varepsilon}$ converges to a biharmonic solution.
    
    \begin{proposition}
    There exists $v\in W^{2, G}(B_2)$ which is a weak solution of
    \begin{equation}\label{eq for v}
    \begin{cases} \Delta\left(g(\Delta v)\Delta v \right)=0, \text{ in }B_2,\\ v=u, \text{ on }\partial B_2,
    \end{cases}
    \end{equation}and satisfies, for each $\delta\in (0, 2)$, the estimate
    \begin{equation}\label{estimate v}
    \sup_{B_{2-\delta}} G(\Delta v)\leq C\left( \int_{B_2}G(\Delta u)\,dx +1\right).
    \end{equation}
    \end{proposition}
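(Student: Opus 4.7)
The plan is to pass to the limit $\varepsilon \to 0$ in the sequence $\{v^\varepsilon\}$ produced by Proposition \ref{existence appox}. First, combining \eqref{uniform integral estimate} with Lemma \ref{comp norm modular} yields a uniform bound on $\|v^\varepsilon\|_{2,G}$; since both $G$ and $\tilde G$ satisfy $\Delta_2$, the space $W^{2,G}(B_2)$ is reflexive and we extract a subsequence $v^\varepsilon \rightharpoonup v$ weakly in $W^{2,G}(B_2)$. The boundary condition $v-u \in W_0^{2,G}(B_2)$ survives the passage to the limit because $u + W_0^{2,G}(B_2)$ is a closed convex, hence weakly closed, affine subspace of $W^{2,G}(B_2)$.

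Weak convergence alone is not enough to handle the nonlinear term $g_\varepsilon(\Delta v^\varepsilon)\Delta v^\varepsilon$, so the next step is to upgrade it to local uniform convergence of $\Delta v^\varepsilon$. The starting point is the elementary identity $G_\varepsilon(t)+G(\sqrt{\varepsilon}) = G(\sqrt{\varepsilon+t^2}) \geq G(t)$, which together with \eqref{max princ} yields a uniform $L^\infty_{\mathrm{loc}}(B_2)$ bound on $\Delta v^\varepsilon$ (and hence on $w^\varepsilon := g_\varepsilon(\Delta v^\varepsilon)\Delta v^\varepsilon$). Since $\Delta w^\varepsilon = 0$, the standard interior estimates for harmonic functions give uniform $C^k_{\mathrm{loc}}$ bounds on $w^\varepsilon$ for every $k$, and Arzelà–Ascoli produces, along a further subsequence, $w^\varepsilon \to w^*$ uniformly on compact subsets of $B_2$. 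By Remark \ref{Diff laplace} we have $\Delta v^\varepsilon = h_\varepsilon(w^\varepsilon)$, where $h_\varepsilon$ is the inverse of $t \mapsto g_\varepsilon(t)t$; since $g_\varepsilon(t)t \to g(t)t$ uniformly on bounded sets and both are strictly monotone, the inverses $h_\varepsilon \to h$ uniformly on bounded sets, yielding $\Delta v^\varepsilon \to h(w^*)$ uniformly on compacta. Uniqueness of weak limits then forces $\Delta v = h(w^*)$, i.e.\ $g(\Delta v)\Delta v = w^*$.

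With this local uniform convergence in hand, passing to the limit in the identity $\int_{B_2} g_\varepsilon(\Delta v^\varepsilon)\Delta v^\varepsilon\, \Delta \varphi\, dx = 0$, valid for all $\varphi \in C_0^\infty(B_2)$, produces the weak formulation of \eqref{eq for v}. The estimate \eqref{estimate v} then follows by writing $G(\Delta v)(x) = \lim_\varepsilon G(\Delta v^\varepsilon)(x)$ on $B_{2-\delta}$, using $G(\Delta v^\varepsilon) \leq G_\varepsilon(\Delta v^\varepsilon) + G(\sqrt{\varepsilon})$, and invoking \eqref{max princ}. The main obstacle is precisely the passage to the limit in the nonlinearity, since $W^{2,G}$-weak convergence of $v^\varepsilon$ is by itself insufficient; the key insight is that the harmonicity of $w^\varepsilon$, combined with the pointwise maximum principle bound \eqref{max princ}, bootstraps weak convergence into local $C^k$-convergence of $w^\varepsilon$, after which the stable inversion $\Delta v^\varepsilon = h_\varepsilon(w^\varepsilon)$ transports this convergence back to the Laplacian and lets every step go through cleanly.
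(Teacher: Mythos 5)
Your argument is correct, and it follows a genuinely different route from the paper's. Both proofs start the same way: the uniform bound \eqref{uniform integral estimate} plus reflexivity of $W^{2,G}(B_2)$ give a weakly convergent subsequence $v^{\varepsilon}\rightharpoonup v$ with the boundary condition preserved. The divergence is in how one identifies the limit of the nonlinear term. The paper works on the Orlicz level: it tests the difference of the two approximate equations against $(v_k-v_l)\varphi$, uses the compact embedding to kill the lower-order terms, uses the pointwise smallness $|g(t)-g_{\varepsilon}(t)|\,t=O(\varepsilon)$ on bounded sets together with \eqref{max princ} to kill the error terms, and finally invokes the strong monotonicity estimate of Lemma~\ref{inequality g} to conclude that $\{\Delta v_k\}$ is Cauchy in $L^G_{\mathrm{loc}}$, hence converges a.e. Your proof instead exploits the linear structure hiding inside the problem: since $w^\varepsilon=g_\varepsilon(\Delta v^\varepsilon)\Delta v^\varepsilon$ is harmonic, the uniform $L^\infty_{\mathrm{loc}}$ bound on $\Delta v^\varepsilon$ coming from \eqref{max princ} bootstraps (via interior estimates for harmonic functions and Arzelà--Ascoli) to $C^k_{\mathrm{loc}}$ convergence of $w^\varepsilon$; you then transport this back to $\Delta v^\varepsilon$ through the locally uniform convergence of the inverse maps $h_\varepsilon\to h$, which holds because $g_\varepsilon(t)t\to g(t)t$ uniformly on bounded sets and the limit is a continuous strictly increasing homeomorphism. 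Your route is shorter and avoids both the monotonicity lemma and the delicate estimate of the four error integrals, at the mild cost of invoking elliptic interior regularity for harmonic functions and a small inverse-function stability lemma; the paper's route stays entirely in the variational/Orlicz framework and produces, as a by-product, genuine strong $W^{2,G}_{\mathrm{loc}}$ convergence of $v^\varepsilon$, which your approach recovers only after the fact from the locally uniform convergence of $\Delta v^\varepsilon$.

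One small point worth making explicit: when you extract the Arzelà--Ascoli subsequence and again when you invoke interior estimates, you are working on an exhaustion $B_{2-\delta}\subset\subset B_2$ with $\delta\downarrow 0$, so a diagonal argument along a discrete sequence $\varepsilon_k\to 0$ is needed to produce a single subsequence converging on all compact subsets of $B_2$; the paper makes this diagonalization explicit via the balls $B_{r_m}$, and you should too.
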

    
    \begin{proof}
    By the estimate \eqref{uniform integral estimate}, there is a subsequence $\varepsilon_k\to 0$, with corresponding solutions $v_k:=v^{\varepsilon_k}$ of \eqref{appox solution} and a function $v\in W^{2, G}(B_2)$, with $u-v\in W_0^{2, G}(B_2)$ such that
    $$v_k \rightharpoonup v \text{ weakly in }W^{2, G}(B_2).$$Consequently,
    $$v_k \to v \text{ strongly in } L^G(B_2)$$
    and$$\nabla v_k \to \nabla v \text{ strongly in } L^G(B_2).$$
    Let $r_m>0$ be an increasing sequence such that $r_m \to 2$ as $m \to \infty$. Fix  $m$ with $r_m>1$. Then, by Proposition  \ref{maximum principle} and the fact that $G\leq G_{\varepsilon}$,
    \begin{equation}\label{bound in B m}
   \|G(v_k)\|_{L^{\infty}(B_{r_m})}\leq  \|G_{\varepsilon_k}(v_k)\|_{L^{\infty}(B_{r_m})}\leq M_m, \text{ for all }k.
    \end{equation}Let $\varphi\in C_0^{\infty}(B_2)$, such that $0\leq \varphi \leq 1$, $\varphi= 1$ in $B_{r_m}$. In what follows, we will show that $\Delta v_k$ is a Cauchy sequence in measure in $B_{r_m}$. Hence, take $k, l$ and consider the corresponding solutions $v_k$ and $v_l$ of \eqref{appox solution}. Then,
$$
\Delta (g(\Delta v_k)\Delta v_k) = \Delta (g(\Delta v_k)\Delta v_k-g_{\varepsilon_k}(\Delta v_k)\Delta v_k)
$$
and similarly,
$$
\Delta (g(\Delta v_l)\Delta v_l) = \Delta (g(\Delta v_l)\Delta v_l-g_{\varepsilon_l}(\Delta v_l)\Delta v_l).
$$
Next, testing with $(v_k-v_l)\varphi$, we get
\begin{equation}\label{four terms}
\begin{split}
&\int_{B_2}\left[g(\Delta v_k)\Delta v_k -g(\Delta v_l)\Delta v_l\right](\Delta v_k-\Delta v_l)\varphi\,dx \\
&\qquad = -\int_{B_2}\left[g(\Delta v_k)\Delta v_k -g(\Delta v_l)\Delta v_l\right](v_k-v_l)\Delta \varphi\,dx\\
&\qquad \quad-  2\int_{B_2}\left[g(\Delta v_k)\Delta v_k -g(\Delta v_l)\Delta v_l\right](\nabla v_k-\nabla v_l)\cdot \nabla \varphi\,dx \\ 
& \qquad \quad + \int_{B_2}[g(\Delta v_k)\Delta v_k-g_{\varepsilon_k}(\Delta v_k)\Delta v_k]\Delta[(v_k-v_l) \varphi]\,dx \\
& \qquad \quad - \int_{B_2}[g(\Delta v_l)\Delta v_l-g_{\varepsilon_l}(\Delta v_l)\Delta v_l]\Delta[(v_k-v_l) \varphi]\,dx\\ 
& \qquad =:I_1+I_2+I_3+I_4.
\end{split}
\end{equation}
    Regarding the term $I_1$, observe that from Lemma \ref{G g} and Proposition \ref{existence appox}, the term
    $$g(\Delta v_k)\Delta v_k -g(\Delta v_l)\Delta v_l$$belongs to $L^{\tilde{G}}(B_2)$ and it is uniformly bounded in norm. Hence, by H\"{o}lder inequality and the strong convergence of $v_k$, we obtain 
    \begin{equation}
    I_1\to 0 \text{ as }l, k \to \infty.
    \end{equation}A similar reasoning shows that 
    \begin{equation}
    I_2\to 0 \text{ as }l, k \to \infty,
    \end{equation}as well. Next, write
    $$I_3=I_{31}+I_{32}+I_{33},$$where 
    $$I_{31}= \int_{B_2}[g(\Delta v_k)\Delta v_k-g_{\varepsilon_k}(\Delta v_k)\Delta v_k][\Delta v_k-\Delta v_l] \varphi\,dx,$$
     $$I_{32}= \int_{B_2}[g(\Delta v_k)\Delta v_k-g_{\varepsilon_k}(\Delta v_k)\Delta v_k]( v_k- v_l)\Delta \varphi\,dx,$$and finally,
      $$I_{33}= 2\int_{B_2}[g(\Delta v_k)\Delta v_k-g_{\varepsilon_k}(\Delta v_k)\Delta v_k](\nabla v_k-\nabla v_l)\cdot \nabla \varphi\,dx.$$Since \eqref{bound in B m} holds, we obtain by Lemma \ref{G g} again,
      \begin{equation}
      |I_{32}|\leq C(M)\int_{B_2}G(v_k-v_l)\,dx \to 0 \text{ as }k, l \to \infty.
      \end{equation}An analogous reasoning shows that $I_{33}\to 0$ as $k, l\to \infty$. Regarding the term $I_{31}$, in \cite{YZ} it is proved that 
      $$h_\varepsilon(t):=|g(t)-g_\varepsilon(t)|t = O(\varepsilon),$$for $t\in [0, M]$. Therefore, this fact together with Proposition \ref{maximum principle} imply that $I_{31}\to 0$ as $k, l\to \infty$. The term $I_4$ in \eqref{four terms} is treated similarly. Hence, we derive that
      $$\int_{B_{r_m}}\left[g(\Delta v_k)\Delta v_k -g(\Delta v_l)\Delta v_l\right](\Delta v_k-\Delta v_l)\,dx \to 0, \quad \mbox{ as }k, l \to \infty.$$By Lemma \ref{inequality g}, this implies that $\left\lbrace \Delta v_k \right\rbrace$ is a Cauchy sequence in $L^G(B_{r_m})$ and hence $v_k \to v$ strongly in $W^{2, G}(B_{r_m}).$ By considering the balls $B_{r_m}$, the assumption $r_m\to 2$ and extracting a diagonal subsequence, we obtain that
\begin{equation}\label{a e convergence}
\Delta v_k \to \Delta v \text{ a.e. in }B_2.
\end{equation}Hence, taking $k\to \infty$ in \eqref{max princ}, the estimate \eqref{estimate v} follows. Finally,  for any test function $\varphi\in C_0^{\infty}(B_2)$, and letting $K:=supp(\varphi)$, we have by \eqref{max princ}
$$
\sup_{K}G(\Delta v^\varepsilon)\leq C.
$$
Consequently,
$$
|g(\Delta v^\varepsilon)\Delta v^\varepsilon \Delta \varphi| \leq C (G(\Delta v^\varepsilon)+1) \leq C
$$
and by \eqref{a e convergence}
$$
g(\Delta v^\varepsilon)\Delta v^\varepsilon \Delta \varphi \to  g(\Delta v)\Delta v \Delta \varphi \quad \text{a.e. in K as } \varepsilon\to 0^+.
$$
Therefore, by Lebesgue's dominated convergence theorem, $v$ solves \eqref{eq for v}.
\end{proof}
   
\begin{proposition}\label{biarmonic aprox}
Let $u\in W^{2, G}_{loc}(\Omega)$ be a weak solution of \eqref{eq}. For $\varepsilon>0$, there is $\delta=\delta(\varepsilon)>0$ such that if
\begin{equation}\label{assumption f}
\int_{B_2}G(\Delta u)\,dx\leq 1 \text{ and }\int_{B_2}G(f)\,dx \leq \delta,
\end{equation}then there is $v\in W^{2, G}(B_2)$ a weak solution of
\begin{equation}\label{auxiliary eq}
\begin{cases}\Delta\left(g(\Delta v)\Delta v\right)=0, \quad \text{in }B_2\\
v-u\in W_0^{2, G}(B_2), \quad \text{on }\partial B_2.
\end{cases}
\end{equation}such that
\begin{equation}\label{conclusion 1}
\int_{B_2}G(\Delta u -\Delta v)\,dx \leq \varepsilon
\end{equation}and
\begin{equation}\label{conclusion 2}
\sup_{B_{3/2}}G(\Delta v)\leq N_0,
\end{equation}for some $N_0>1$.
\end{proposition}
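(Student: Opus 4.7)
The plan is to observe that most of the claim is already contained in the preceding proposition, and to reduce the only new content, the closeness estimate \eqref{conclusion 1}, to a direct energy identity. Existence of $v$ and \eqref{conclusion 2} come for free: the previous proposition already produces a $v \in W^{2,G}(B_2)$ with $v-u\in W_0^{2,G}(B_2)$ solving \eqref{auxiliary eq} and satisfying $\sup_{B_{2-\delta'}} G(\Delta v) \leq C(\int_{B_2} G(\Delta u)\,dx + 1)$ for every $\delta'\in(0,2)$. Choosing $\delta'=1/2$ and using $\int_{B_2} G(\Delta u)\,dx \leq 1$, one gets $\sup_{B_{3/2}} G(\Delta v) \leq 2C =: N_0$.

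For \eqref{conclusion 1}, the key idea is that $\varphi := u - v \in W_0^{2,G}(B_2)$ by construction, so (extending by zero and invoking the density remark following Definition \ref{defi weak}) it is an admissible test function in both weak formulations. Using it in \eqref{eq} and in \eqref{auxiliary eq} and subtracting produces
\begin{equation*}
\int_{B_2}\bigl[g(\Delta u)\Delta u - g(\Delta v)\Delta v\bigr](\Delta u - \Delta v)\,dx
= \int_{B_2} g(f)\,f\,\Delta(u-v)\,dx.
\end{equation*}
Lemma \ref{inequality g} applied pointwise on the left yields the lower bound $C\int_{B_2} G(|\Delta u - \Delta v|)\,dx$.

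For the right-hand side I would apply the Orlicz Young inequality \eqref{Young} with a small parameter $\eta>0$, putting the weight on the $\tilde{G}$ factor, so that
\begin{equation*}
|g(f)\,f|\,|\Delta(u-v)| \leq \eta\, G(|\Delta u-\Delta v|) + \eta^{-p^+}\,\tilde{G}\bigl(g(f)\,f\bigr),
\end{equation*}
and then bound $\tilde{G}(g(f)\,f)\leq (p^+-1)G(f)$ by Lemma \ref{G g}. Integrating and combining with the lower bound, I get
\begin{equation*}
C\int_{B_2} G(|\Delta u - \Delta v|)\,dx \leq \eta \int_{B_2} G(|\Delta u - \Delta v|)\,dx + \eta^{-p^+}(p^+-1)\int_{B_2} G(f)\,dx.
\end{equation*}
Choosing, say, $\eta = C/2$ to absorb the first term into the left, and then setting $\delta := \varepsilon/C'$ with $C' = 2C^{-p^+-1}(p^+-1)$, the hypothesis $\int_{B_2} G(f)\,dx \leq \delta$ forces $\int_{B_2} G(|\Delta u-\Delta v|)\,dx \leq \varepsilon$.

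The argument is largely bookkeeping once the right test function is chosen, so there is no genuinely hard step; the one place requiring care is tracking the exponents in \eqref{Young} so that the small parameter $\eta$ multiplies the term to be absorbed, $G(|\Delta u-\Delta v|)$, while the blowing-up factor $\eta^{-p^+}$ attaches to $G(f)$, which is controllable by the smallness hypothesis on the source. A secondary point is the (routine) justification that $u-v$, extended by zero to $\Omega$, is genuinely in $W_0^{2,G}(\Omega)$ and hence admissible in the local weak formulation of \eqref{eq}.
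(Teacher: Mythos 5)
Your proof is correct and takes essentially the same route as the paper: existence of $v$ and the bound \eqref{conclusion 2} come from the preceding proposition, and \eqref{conclusion 1} is obtained by testing both weak formulations with $\varphi=u-v$, bounding the resulting left-hand side from below via Lemma \ref{inequality g} and the right-hand side from above by a Young-type split combined with Lemma \ref{G g}, then absorbing the small term and choosing $\delta$ accordingly. (The only cosmetic difference is that you track the exponent in the Young inequality explicitly as $\eta^{-p^+}$, whereas the paper is content to write a generic constant $C_\eta$; in fact, putting the small parameter on the $G$-factor rather than the $\tilde G$-factor requires the $\Delta_2$-condition on $\tilde G$ and the exponent is not literally $p^+$, but this does not affect the argument.)
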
 

\begin{proof}
The estimate \eqref{conclusion 2} follows from \eqref{estimate v} and the assumption \eqref{assumption f}.

Since $u$ is a weak solution of \eqref{eq} and $v$ is a solution of \eqref{auxiliary eq}, we test both equations with $\varphi=u-v \in W_0^{2, G}(B_2)$ and we get
\begin{equation}\label{test with varphi}
\int_{B_2}\left(g(\Delta u)\Delta u-g(\Delta v)\Delta v\right)(\Delta u-\Delta v)\,dx =\int_{B_2}\left(g(f)f\right)(\Delta u-\Delta v)\,dx.
\end{equation}By Lemma \ref{inequality g} and Young inequality \eqref{Young} with $\eta>0$, we get
\begin{equation}\label{ineq 1}
\int_{B_2}\left(g(\Delta u)\Delta u-g(\Delta v)\Delta v\right)(\Delta u-\Delta v)\,dx \geq C\int_{B_2}G(\Delta u-\Delta v)\,dx
\end{equation}and
\begin{equation}\label{ineq 2}
\int_{B_2}\left(g(f)f\right)(\Delta u-\Delta v)\,dx\leq C_\eta\int_{B_2}G(f)\,dx + \eta\int_{B_2}G(\Delta u -\Delta v)\,dx.
\end{equation}Hence, combining \eqref{ineq 1} and \eqref{ineq 2} with \eqref{test with varphi} and appealing to \eqref{assumption f}, we get 
\begin{equation}
C\int_{B_2}G(\Delta u-\Delta v)\,dx\leq \delta C_\eta + \eta\int_{B_2}G(\Delta u -\Delta v)\,dx.
\end{equation}Choosing $\eta < C/2$ and, for $\varepsilon>0$ arbitrary,  $\delta < \varepsilon/ (2C_\eta)$ give \eqref{conclusion 1}. 
\end{proof}  

\subsection{Step 3: estimates for the level sets of the Hardy-Littlewood maximal function}

\begin{proposition}\label{prop auxi}
There exists $N_2>1$ such that for any $\varepsilon>0$, there is $\delta>0$ so that if $u$ is a local weak solution of \eqref{eq} in $\Omega$ with $B_{4r}\subset \Omega$ and
$$|\left\lbrace x\in B_r: \mathcal{M}(G(\Delta u))(x)>N_2\right\rbrace| \geq \varepsilon|B_r|,$$then there holds
$$B_r \subset \left\lbrace x\in B_r: \mathcal{M}(G(\Delta u))(x)>1\right\rbrace \cup \left\lbrace x\in B_r: \mathcal{M}(G(f))(x)>\delta \right\rbrace.$$
\end{proposition}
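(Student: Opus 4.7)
I argue by contrapositive: suppose there is $x_0\in B_r$ with both $\mathcal{M}(G(\Delta u))(x_0)\le 1$ and $\mathcal{M}(G(f))(x_0)\le \delta$, and aim to bound the measure of $\{x\in B_r:\mathcal{M}(G(\Delta u))(x)>N_2\}$ by $\varepsilon|B_r|$ for a suitably chosen universal $N_2$. Since $x_0\in B_r$, the ball $B_{3r}(x_0)$ lies inside $B_{4r}\subset\Omega$, and the pointwise bounds at $x_0$ translate immediately into the integral conditions
$$
\fint_{B_{3r}(x_0)}G(\Delta u)\,dy\le 1,\qquad \fint_{B_{3r}(x_0)}G(f)\,dy\le \delta.
$$

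Next I apply a rescaled version of Proposition~\ref{biarmonic aprox}. The change of variables $\hat u(y)=(3r/2)^{-2}u(x_0+(3r/2)y)$ and $\hat f(y)=f(x_0+(3r/2)y)$ preserves both the equation \eqref{eq} and the averaged integrals above; thus for any target $\varepsilon'>0$ there is $\delta'(\varepsilon')>0$ such that, provided $\delta\le \delta'$, Proposition~\ref{biarmonic aprox} applied to $\hat u$ on $B_2$ yields, after scaling back, a weak solution $v$ of $\Delta(g(\Delta v)\Delta v)=0$ on $B_{3r}(x_0)$ with $v-u\in W^{2,G}_0(B_{3r}(x_0))$ and
$$
\fint_{B_{3r}(x_0)}G(\Delta u-\Delta v)\,dy\le \varepsilon',\qquad \sup_{B_{9r/4}(x_0)}G(\Delta v)\le N_0.
$$

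The heart of the argument is a two-scale split of $\mathcal{M}(G(\Delta u))$ on $B_r$. Set $\phi:=G(\Delta u-\Delta v)\chi_{B_{9r/4}(x_0)}$ and fix $x\in B_r$. For radii $\rho\le r/4$ one has $B_\rho(x)\subset B_{9r/4}(x_0)$, hence by convexity and the $\Delta_2$ condition on $G$,
$$
\fint_{B_\rho(x)}G(\Delta u)\,dy\le C\,\mathcal{M}(\phi)(x)+CN_0.
$$
For radii $\rho>r/4$ one has $B_\rho(x)\subset B_{\rho+2r}(x_0)$, and since $\mathcal{M}(G(\Delta u))(x_0)\le 1$,
$$
\fint_{B_\rho(x)}G(\Delta u)\,dy\le\left(1+\tfrac{2r}{\rho}\right)^n\mathcal{M}(G(\Delta u))(x_0)\le 9^n.
$$
Taking the supremum over $\rho$ gives $\mathcal{M}(G(\Delta u))(x)\le C\,\mathcal{M}(\phi)(x)+C(N_0+9^n)$ for every $x\in B_r$.

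I finally choose $N_2:=2C(N_0+9^n)$, so that $\{x\in B_r:\mathcal{M}(G(\Delta u))(x)>N_2\}\subset\{x\in B_r:\mathcal{M}(\phi)(x)>N_2/(2C)\}$, and apply the weak $(1,1)$ bound for $\mathcal{M}$ together with the comparison estimate to obtain
$$
|\{x\in B_r:\mathcal{M}(G(\Delta u))(x)>N_2\}|\le \frac{C'}{N_2}\int_{B_{9r/4}(x_0)}G(\Delta u-\Delta v)\,dy\le \frac{C''\,\varepsilon'\,|B_{3r}|}{N_2}.
$$
Choosing $\varepsilon'$ small enough that $C''\varepsilon' 3^n/N_2<\varepsilon$ and setting $\delta:=\delta'(\varepsilon')$ closes the contrapositive. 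The main technical care lies in transporting the universal constants $N_0$ and $\varepsilon'$ produced by Proposition~\ref{biarmonic aprox} on the unit ball faithfully back to $B_{3r}(x_0)$; once that scaling is fixed, the rest is the standard Wang-type good-$\lambda$ step.
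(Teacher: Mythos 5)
Your proof is correct and follows essentially the same route as the paper's: argue the contrapositive, pull the pointwise bounds at a good point into integral bounds over a larger ball, invoke Proposition~\ref{biarmonic aprox} for the biharmonic comparison function $v$, split the maximal function into small radii (controlled by $\mathcal{M}(G(\Delta u-\Delta v))$ plus the $L^\infty$-bound $N_0$ on $G(\Delta v)$) and large radii (controlled by $\mathcal{M}(G(\Delta u))(x_0)\le 1$), then finish with the weak-$(1,1)$ estimate. The only cosmetic differences are that the paper rescales once to $r=1$ at the very start and phrases the middle step as a set inclusion $A\subset B$, whereas you keep $r$ general, rescale locally around $x_0$ to invoke the proposition, and write the same content as a pointwise inequality $\mathcal{M}(G(\Delta u))(x)\le C\mathcal{M}(\phi)(x)+C(N_0+9^n)$ before choosing $N_2$; those are equivalent. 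One shared loose end, present in the paper too: Proposition~\ref{biarmonic aprox} is stated with $\int_{B_2}G(\Delta u)\le 1$ rather than $\fint_{B_2}G(\Delta u)\le 1$, so the normalization by $|B_2|$ (and the factor $(3/2)^n$ or $3^n$) should be absorbed into the constants; both arguments gloss over this in the same harmless way.
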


\begin{proof}
By rescaling
$$u_r(x):=\dfrac{1}{r^2}u(rx), \quad h(x):=f(rx)$$we obtain that $v$ is a local weak solution of
$$\Delta(g(\Delta w )\Delta w)=\Delta (g(h)h),$$in the set $\Omega'=\frac{1}{r}\Omega$. Observe that by assumption $B_4\subset \Omega'$. Therefore, it is enough to show that
$$|\left\lbrace x\in B_1: \mathcal{M}(G(\Delta u))(x)>N_2\right\rbrace| \geq \varepsilon|B_1|,$$implies
$$B_1 \subset \left\lbrace x\in B_1: \mathcal{M}(G(\Delta u))(x)>1\right\rbrace \cup \left\lbrace x\in B_1: \mathcal{M}(G(f))(x)>\delta \right\rbrace.$$

We will prove the counter reciprocal. Assume that $x_1\in B_1$ satisfies
\begin{equation}\label{ineq 13}
\mathcal{M}(G(\Delta u))(x_1)\leq 1 \quad \text{ and } \mathcal{M}(G(f))(x_1)\leq \delta.
\end{equation}Then,
$$\fint_{B_2}G(\Delta u)\,dx = \dfrac{1}{\omega_n2^n}\int_{B_2}G(\Delta u)\,dx \leq \left( \dfrac{3}{2}\right)^n\fint_{B_3(x_1)}G(\Delta u)\,dx \leq \left(\dfrac{3}{2}\right)^n,$$where $\omega_n$ is the measure of the unit ball in $\mathbb{R}^n$. Similarly,
\begin{equation}
\fint_{B_2}G(f)\,dx \leq \left(\dfrac{3}{2}\right)^n\delta.
\end{equation}By Proposition \ref{biarmonic aprox}, for any $\eta>0$, there is $\delta>0$ and $v\in W^{2, G}(B_2)$ a local weak solution of \eqref{eq for v} such that 
\begin{equation}\label{est 12}
\fint_{B_2}G(\Delta u -\Delta v)\,dx  \leq \eta
\end{equation}and
\begin{equation}\label{est 11}
\sup_{B_{3/2}}G(\Delta v)\leq N_0,
\end{equation}for some $N_0>1$. Let 
$$A:=\left\lbrace x\in B_1:\mathcal{M}( G(\Delta u))(x)>N_2\right\rbrace$$and
$$B=\left\lbrace x\in B_1: \mathcal{M}( G(\Delta u-\Delta v))(x)>N_0\right\rbrace,$$for some $N_2>1$ to be chosen. We will show that $A\subset B$. Let 
\begin{equation}\label{cond x 2}
x_2\in \left\lbrace x\in B_1:\mathcal{M}(G(\Delta u -\Delta v))(x)\leq N_0 \right\rbrace.\end{equation}
Then, for $r\le 1/2$, we have $B_r(x_2)\subset B_{3/2}$ and so by \eqref{cond x 2} and  \eqref{est 11}, we obtain
 \begin{equation}
 \begin{split}
 \fint_{B_r(x_2)}G(\Delta u)\,dx &\leq \dfrac{1}{2}\left(\fint_{B_r(x_2)}G(2(\Delta u- \Delta v))\,dx +\fint_{B_r(x_2)}G(2\Delta v)\,dx \right) \\& \leq C\left(\fint_{B_r(x_2)}G(\Delta u- \Delta v)\,dx +\fint_{B_r(x_2)}G(\Delta v)\,dx \right) \\&
 \leq CN_0,
 \end{split}
 \end{equation}
 If now $r>1/2$, then 
 $$
 \fint_{B_r(x_2)}G(\Delta u)\,dx \leq 5^n \fint_{B_{5r}(x_1)}G(\Delta u)\,dx \leq 5^n,
 $$
 by \eqref{ineq 13}. Therefore, we conclude that there is $N_2>1$ such that $x_2 \notin A$. Thus, $A\subset B$.  Hence, 
 \begin{equation*}
 \begin{split}
& |\left\lbrace x \in B_1:\mathcal{M}(G(\Delta u))(x)>N_2 \right\rbrace|\\ & \qquad = |A| \leq |B|\\&  \qquad =|\left\lbrace x\in B_1: \mathcal{M}(G(\Delta u-\Delta v))(x)>N_0\right\rbrace|\\& \qquad \leq C\int_{B_1}G(\Delta u-\Delta v)\,dx \leq C\eta \leq \varepsilon,
 \end{split}
 \end{equation*}taking $\eta$ small enough. This ends the proof of the proposition. 
\end{proof}

\begin{proposition}
Assume that $u$ is a local weak solution of \eqref{eq} and let $\delta$ and $N_2$ as in the previous proposition. Assume that
\begin{equation}
|\left\lbrace x\in B_1: \mathcal{M}(G(\Delta u))(x)>N_2 \right\rbrace|<\varepsilon |B_1|.
\end{equation}Then, for any $\lambda>0$, there holds
\begin{equation}\label{tbp}
\begin{split}
&|\left\lbrace x\in B_1: \mathcal{M}(G(\Delta u))(x)>\lambda N_2 \right\rbrace|\\&\qquad \leq 10^n\varepsilon\left(|\left\lbrace x\in B_1: \mathcal{M}(G(\Delta u))(x)> \lambda \right\rbrace| + |\left\lbrace x\in B_1: \mathcal{M}(G(f))(x)>\lambda \delta \right\rbrace|\right).
\end{split}
\end{equation}
\end{proposition}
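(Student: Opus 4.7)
The plan is to apply the modified Vitali lemma (Lemma~\ref{vitali}) to the measurable subsets of $B_1$ given by
\begin{align*}
C &:= \{x \in B_1 : \mathcal{M}(G(\Delta u))(x) > \lambda N_2\},\\
D &:= \{x \in B_1 : \mathcal{M}(G(\Delta u))(x) > \lambda\} \cup \{x \in B_1 : \mathcal{M}(G(f))(x) > \lambda \delta\}.
\end{align*}
Because $N_2 > 1$, one has $C \subset D \subset B_1$. The measure assumption $|C| < \varepsilon |B_1|$ required by Lemma~\ref{vitali} follows, when $\lambda \geq 1$, from the monotone inclusion $C \subset \{x \in B_1 : \mathcal{M}(G(\Delta u))(x) > N_2\}$ combined with the standing hypothesis of the proposition; this is the range relevant for the subsequent $L^q$ iteration.

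The crux is the Vitali covering condition: for every $x \in B_1$, if $|C \cap B_r(x)| \geq \varepsilon |B_r|$, then $B_r(x) \cap B_1 \subset D$. I argue by contraposition. Suppose there exists $y \in B_r(x) \cap B_1 \setminus D$, so that
\[
\mathcal{M}(G(\Delta u))(y) \leq \lambda \quad \text{and} \quad \mathcal{M}(G(f))(y) \leq \lambda \delta.
\]
To invoke Proposition~\ref{prop auxi} at level $\lambda$, I rescale the nonlinearity by setting
\[
\tilde G(t) := G(t)/\lambda, \qquad \tilde g(t) := g(t)/\lambda.
\]
Then $\tilde G$ is again an N-function, $u$ is a weak solution of $\Delta(\tilde g(\Delta u)\Delta u) = \Delta(\tilde g(f) f)$ (obtained by dividing \eqref{eq} by $\lambda$), and every structural requirement \eqref{g}, \eqref{G}, \eqref{GG}-\eqref{GGG} is preserved with \emph{the same} constants $p^{\pm}$, since these conditions involve only scale-invariant ratios of $g$, $G$, and their derivatives. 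Consequently Proposition~\ref{prop auxi} applies verbatim to $u$ with nonlinearity $\tilde G$, delivering the same thresholds $N_2$ and $\delta$.

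Since $\mathcal{M}(\tilde G(\cdot)) = \lambda^{-1}\mathcal{M}(G(\cdot))$, the working assumption $|C \cap B_r(x)| \geq \varepsilon |B_r|$ translates to $|\{z \in B_r(x) : \mathcal{M}(\tilde G(\Delta u))(z) > N_2\}| \geq \varepsilon |B_r|$, which triggers the conclusion
\[
B_r(x) \subset \{\mathcal{M}(\tilde G(\Delta u)) > 1\} \cup \{\mathcal{M}(\tilde G(f)) > \delta\} = \{\mathcal{M}(G(\Delta u)) > \lambda\} \cup \{\mathcal{M}(G(f)) > \lambda\delta\}.
\]
In particular $y \in D$, contradicting our choice. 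Lemma~\ref{vitali} now yields $|C| \leq 10^n\varepsilon |D|$, which is precisely \eqref{tbp}. The main obstacle, in my view, is confirming that rescaling $G \mapsto G/\lambda$ leaves every constant produced by Proposition~\ref{prop auxi} unchanged; fortunately the hypotheses on $G$ are all scale-invariant and the proof of that proposition uses only these structural data, so the transport of $N_2$ and $\delta$ to $\tilde G$ is automatic.
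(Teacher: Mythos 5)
Your proof follows essentially the same strategy as the paper's: define $C$ and $D$, apply the modified Vitali Lemma~\ref{vitali}, and for general $\lambda$ reduce to Proposition~\ref{prop auxi} by rescaling the nonlinearity (the paper writes this as ``$u$ is a weak solution of $\frac{1}{\lambda}\Delta(g(\Delta u)\Delta u)=\frac{1}{\lambda}\Delta(g(f)f)$,'' which is the same thing as your $\tilde g=g/\lambda$, $\tilde G=G/\lambda$). You articulate the rescaling mechanism more explicitly than the paper does, in particular by pointing out that all the structural hypotheses \eqref{g}, \eqref{G}, \eqref{GG}--\eqref{GGG} are scale-invariant and hence $N_2$, $\delta$ transport unchanged; you also correctly flag that the Vitali hypothesis $|C|<\varepsilon|B_1|$ does not follow from the stated assumption when $\lambda<1$, a restriction the paper leaves implicit and which is in any case harmless since the subsequent $L^q$ summation only uses dyadic levels $\lambda=N_2^k$, $k\geq 0$. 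In short: same argument, with a bit more care in two places.
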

 
\begin{proof}
The proof follows as in \cite{W} and \cite[Lemma 2.15]{YZ}, by applying the Vitali Lemma \ref{vitali}. Indeed, consider first the case $\lambda=1$ and define the sets
$$C=\left\lbrace x \in B_1: \mathcal{M}(G(\Delta u))(x) > N_2 \right\rbrace$$
 and
 $$D:=\left\lbrace x\in B_1: \mathcal{M}(G(\Delta u))(x)>1 \right\rbrace \cup \left\lbrace x\in B_1: \mathcal{M}(G(f))(x)>\delta \right\rbrace.$$Clearly,
 $$
 C\subset D\subset B_1
$$
and by assumption,
$$
|C|<\varepsilon |B_1|.
$$
Hence, by Proposition \ref{prop auxi}, we may apply Lemma \ref{vitali} to obtain that inequality \eqref{tbp} holds. The general case for any $\lambda>0$ follows by considering that $u$ is a weak solutions of
 $$\dfrac{1}{\lambda}\Delta (g(\Delta u)\Delta u)= \dfrac{1}{\lambda}\Delta (g(f)f),$$and hence there is $N_2>1$ such that for any $\varepsilon>0$ and $r>1$, there is $\delta>0$ such that
 $$\bigg|\left\lbrace x\in B_r: \dfrac{1}{\lambda}\mathcal{M}(G(\Delta u))(x)>N_2\right\rbrace\bigg| \geq \varepsilon|B_r|,$$then there holds
$$B_r \subset \left\lbrace x\in B_r: \dfrac{1}{\lambda}\mathcal{M}(G(\Delta u))(x)>1\right\rbrace \cup \left\lbrace x\in B_r: \dfrac{1}{\lambda}\mathcal{M}(G(f))(x)>\delta \right\rbrace$$
and the proof is complete.
\end{proof}

In the final step, we will finish the proof of Theorem \ref{main theorem}.

\subsection*{Step 4: proof of Theorem \ref{main theorem}} For $\delta>0$ small to be chosen, let
$$\lambda_0:= \dfrac{1}{\delta}\left\lbrace  \int_{B_2}G(u)\,dx +\int_{B_2}G(|\nabla u|)\,dx+ \left(\int_{B_2}G(f)^q \,dx\right)^{1/q}\right\rbrace.$$Then, by Theorem \ref{theorem q 1}
\begin{equation}
\int_{B_1}\dfrac{1}{\lambda_0}G(\Delta u)\,dx \leq C\left(\int_{B_2}\dfrac{1}{\lambda_0}G(u)\,dx+\int_{B_2}\dfrac{1}{\lambda_0}G(|\nabla u|)\,dx +\int_{B_2}\dfrac{1}{\lambda_0}G(f)\,dx\right).
\end{equation}Therefore, by H\"{o}lder's inequality,
\begin{equation}\label{first calc}
\begin{split}
\fint_{B_1}\dfrac{1}{\lambda_0}G(\Delta u)\,dx  & \leq  C\left(\fint_{B_2}\dfrac{1}{\lambda_0}G(u)\,dx+\fint_{B_2}\dfrac{1}{\lambda_0}G(|\nabla u|)\,dx +\fint_{B_2}\dfrac{1}{\lambda_0}G(f)\,dx \right)\\& \leq C\left(\delta + \dfrac{1}{\lambda_0}\left(\fint_{B_2}G(f)^q\,dx\right)^{1/q}\right) \leq C\delta.
\end{split}
\end{equation}Moreover, 
\begin{equation}
\left|\left\lbrace x\in B_1 : \dfrac{1}{\lambda_0}\mathcal{M}(G(\Delta u))(x)>N_2\right\rbrace\right|\leq \dfrac{1}{N_2}\int_{B_1}\dfrac{1}{\lambda_0}G(\Delta u)\,dx \leq \varepsilon|B_1|,
\end{equation}choosing $\delta<\varepsilon N_2/C$ in \eqref{first calc}. Also, by definition of $\lambda_0$,
\begin{equation}\label{second calc}
\int_{B_1}\dfrac{1}{\lambda_0^q}G(f)^q\,dx \leq C\delta^q.
\end{equation}Hence, by the previous proposition, we derive
\begin{equation}
\begin{split}
&\bigg|\left\lbrace x\in B_1: \dfrac{1}{\lambda_0}\mathcal{M}(G(\Delta u))(x)>\lambda N_2 \right\rbrace\bigg|\\&\qquad \leq 10^n\varepsilon\left(\bigg|\left\lbrace x\in B_1: \dfrac{1}{\lambda_0}\mathcal{M}(G(\Delta u))(x)> \lambda \right\rbrace\bigg| + \bigg|\left\lbrace x\in B_1: \dfrac{1}{\lambda_0}\mathcal{M}(G(f))(x)>\lambda \delta^q \right\rbrace\bigg|\right).
\end{split}
\end{equation}Thus, as $q>1$,we deduce as in \cite[page 1550]{YZ}, that
\begin{equation}
\int_{B_1}\left(\dfrac{1}{\lambda_0}\mathcal{M}(G(\Delta u)) \right)^q\,dx \leq C_1\varepsilon\int_{B_1}\left(\dfrac{1}{\lambda_0}\mathcal{M}(G(\Delta u)) \right)^q\,dx+C_2\int_{B_1}\left(\dfrac{1}{\lambda_0}G(f) \right)^q\,dx.
\end{equation} So, choosing $\varepsilon$ small and recalling \eqref{second calc}, we obtain

$$\int_{B_1}\left(\dfrac{1}{\lambda_0}\mathcal{M}(G(\Delta u)) \right)^q\,dx \leq C$$which certainly implies 
$$\int_{B_1}\left(\dfrac{1}{\lambda_0}G(\Delta u) \right)^q\,dx \leq C$$and consequently,
$$\int_{B_1}G(\Delta u)^q\,dx \leq C\left\lbrace\left(\int_{B_2}G(u)\,dx + \int_{B_2}G(|\nabla u|)\,dx\right)^q + \int_{B_2}G(f)^q\,dx \right\rbrace.$$

This completes the proof of the Theorem.\qed

    \end{document}